\newcommand{\R}{\mathbb R}
\newcommand{\C}{\mathbb C}
\DeclareMathOperator{\Imm}{Im}
\DeclareMathOperator{\Rre}{Re}
\newcommand{\p}{\partial}
\newcommand{\dbarb}{\bar\partial_b}
\newcommand{\Boxb}{\Box_b}
\DeclareMathOperator{\csch}{csch}
\DeclareMathOperator{\spt}{supp}
\newcommand{\SCH}{Schr\"{o}dinger}
\newcommand{\HOL}{H\"{o}lder}
\newcommand{\Ainf}{\ensuremath{A_{\infty}}}
\DeclareMathOperator{\av}{av}
\newtheorem{thm}{Theorem}[section]
\newtheorem{prop}[thm]{Proposition}
\newtheorem{lmm}[thm]{Lemma}
\theoremstyle{definition}
\newtheorem{defn}[thm]{Definition}
\theoremstyle{remark}
\begin{document}

\title{Schr\"odinger Operators With $A_\infty$ Potentials}

\author{Andrew Raich and Michael Tinker}

\thanks{The first author was partially supported by NSF grant DMS-1405100.}
\thanks{The main results were  part of Tinker's Ph.D. thesis which he completed under Raich's supervision.}

\address{Department of Mathematical Sciences, SCEN 309, 1 University of Arkansas, Fayetteville, AR 72701}
\email{araich@uark.edu}

\address{Plano, TX}
\email{michael.tinker@ca.com}

% Key words and phrases:
\keywords{heat kernel, Schr\"odinger operator, reverse H\"older class, Muckenhoupt class, $A_\infty$ potential}

%% Mathematical classification (2010)
\subjclass[2010]{35K08, 35J10, 32W30}

\begin{abstract} We study the heat kernel $p(x,y,t)$ 
associated to the real Schr\"odinger operator $H = -\Delta + V$ on $L^2(\mathbb{R}^n)$,
$n \geq 1$. 
Our main result is a pointwise upper bound on $p$ when the potential
$V \in A_\infty$. In the case that $V\in RH_\infty$, we also prove a lower bound. 
Additionally, we compute $p$ explicitly when $V$ is a quadratic polynomial. 
\end{abstract}

\maketitle

%%%%%%%%%%%%%%%%%%%%%%
%
%				SECTION: MOTIVATION AND RELATED WORK
%
%%%%%%%%%%%%%%%%%%%%%%%
\section{Motivation and Related Work}\label{inspire}

% subsection: real Schrodinger operators
\subsection{Real Schr\"odinger operators}
\SCH\ operators $H = -\Delta + V$ enjoy 
considerable  
interest due to their physical importance. For example, thousands of 
papers have been devoted to the study of 
quantal anharmonic oscillators, which feature $H$ with potential 
$V(x) = x^2 + \lambda x^{2m}$. 
In general, given any $V \geq 0$ in $L_{\textrm{loc}}^1(\mathbb{R}^n)$, 
we can define $H$ as an operator on $L^2(\mathbb{R}^n)$ through 
its Dirichlet form (see \cite{AuBA07}). This leads to an
associated $H$-heat equation
\[
\begin{cases} \p_t u + Hu =0 & \text{in } \R^n\times(0,\infty) \\
\displaystyle \lim_{t\to 0^+} u(x,t) = f(x) & \text{on } \R^n
\end{cases}
\]
where the limit is in $L^2(\mathbb{R}^n)$. General  
solutions to this equation are weak solutions (defined below)
given by integration against a kernel $p(x,y,t)$ called the 
heat kernel of $H$. Namely,
\[
u(x,t) = e^{-tH}f(x) = \int_{\R^n} p(x,y,t)f(y)\, dy,
\]

Nonnegativity of $V$ implies a trivial Gaussian
bound $p(x,y,t) \leq (4\pi t)^{-n/2}e^{-\frac{|x-y|^2}{4t}}$ 
(see \cite{Dav89}). 
Looking closer, there is a diverse 
literature on upper bounds for $p$,  
mirroring the variety 
of interesting potentials to work with. 
A common reference point is Davies' results for $H$ with continuous 
potentials diverging to infinity as $|x| \to \infty$; he established 
the qualitatively sharp bounds
\[
p(x,y,t) \leq c(t)\phi(x)\phi(y)
\]
where $\phi$ is the $L^2$-normalized ground state of $H$, and 
$c(t)$ has an explicit description as $t \to 0$ (again see \cite{Dav89}). 
For classes of potentials that are not as well-behaved, one still hopes to 
prove extra-Gaussian decay in terms of $V$, even if sharp results 
are not attainable. 

% Connection with several complex variables
\subsection{Connection with several complex variables}
Our original motivation was the Kohn Laplacian $\Boxb$ on a class of CR manifolds in $\C^2$.
For a subharmonic function $\phi$, the three-dimensional CR manifold $M$ defined by
$$M  = \{(z,w) \in \C^2 : \Imm w = \phi(z)\}$$
is pseudoconvex, the complex analysis version of convexity.
Since $M$ does not depend on $\Rre w$, we can analyze the Kohn Laplacian $\Boxb$ and the $\Boxb$-heat kernel via a partial Fourier transform in $\Rre w$. This has been carried
out by several mathematicians, e.g., \cite{Chr91, Rai06h, Rai06f, Rai07, Rai12, BoRa13h, Ber96}. If, in addition, $\phi(z) = \phi(\Rre z)$ only depends on $\Rre z$, then Nagel observed
that a partial Fourier transform in $\Imm z$ reduces $\Boxb$ and its associated operators even further \cite{Nag86}. If $\tau$ and $\eta$ are the transform variables of $\Rre w$ and $\Imm z$, respectively,
then
\[
\widehat\Boxb = -\Delta + \phi''\tau + (\eta - \tau\phi')^2,
\]
an operator of the form $H$ with $V = \phi''\tau + (\eta-\tau\phi')^2$. Since $\phi$ is convex (a subharmonic function of one variable is convex), $V\geq 0$ when $\tau\geq 0$. Nagel's observation
about the reduced form of $\dbarb$ and its associated operators when $\phi(z) = \phi(\Rre z)$ has been repeatedly exploited \cite{Has94, HaNaWa10, RaTi15}. Once we have sufficient estimates on
$H$, we will be able to recover information about $\Boxb$, in the same spirit as \cite{Rai12, BoRa13h}.

% subsection: Potentials in $A_\infty$
\subsection{Potentials in reverse H\"older classes and $A_\infty$}
In this note, we focus on $V$
belonging to the Muckenhoupt class $A_\infty$ \cite{Muc72}. Membership in $A_\infty$ 
is equivalent to membership in some reverse \HOL\ class $RH_q$ for $q > 1$, where 
$RH_q$ is defined as follows. 
\begin{defn}\label{rvhold}
For $1 < q \leq \infty$, nonnegative $V \in L_{loc}^{q}(\mathbb{R}^{n})$ belongs to the
 reverse H\"{o}lder class $RH_{q}$ if there exists $C > 0$ such that 
 for all cubes $Q$
 of $\mathbb{R}^{n}$,
 $$
 	\biggl(\frac{1}{|Q|}\int_{Q}V^{q}\,dx\biggr)^{1/q} \leq \frac{C}{|Q|}\int_{Q}V\,dx
$$
where for $q = \infty$ the left hand side is the ess sup over $Q$. 
In particular, $A_{\infty} = \cup_{q > 1}RH_{q}$. 
\end{defn}
Such potentials need not have uniform behavior at infinity, e.g., $V(x_1,x_2,x_3) = 
(x_1x_2x_3)^2$; and may have integrable singularities, e.g., $V(x) = |x|^{-\alpha}$ 
with $\alpha < n$.  See Stein \cite{Ste93} for further details. In \cite{Kur00} 
Kurata proved, for $n \geq 2$ and $V \in RH_q$ with $q \geq n/2$, that 
\begin{equation}\label{whynot}
	p(x,y,t) \leq \frac{c_{0}}{t^{n/2}}e^{-c_{2}\frac{|x-y|^{2}}{t}}
	\exp\left\{-c_{1}(1 + m_{V}(x)^{2}t)^{1/(2(k_{0} + 1))}\right\}
\end{equation}
where $m_{V}(x)$ is a function measuring the effective growth of $V$ near $x$.
This estimate gives a non-sharp order of decay for computable examples such 
as $V = |x|^\alpha$ where $\alpha > 0$; hence its primary interest lies in the 
diversity of potentials to which it applies. The natural question is whether 
we can go further to remove the limitations on $n$ and $q$.  

\section{An Upper Bound for $p$ when $V \in A_\infty$}\label{result}
We provide just such an analogue of \eqref{whynot} for
all $n \geq 1$ and $q > 1$. Specifically, let
\[
\av_{Z_{r}(x)} V = \frac{1}{|Z_{r}(x)|} \int_{Z_{r}(x)} V\, dx
\]
denote the average of $V$ over the cube $Z_r(x)$ centered
at $x$ with side length $r$. Then we establish the following.
% Theorem: main result, upper bounds
\begin{thm}\label{bus} If $V \in A_{\infty}$, the heat kernel of the Schr\"{o}dinger
operator $H = -\Delta + V$ on $L^{2}(\mathbb{R}^{n})$ satisfies
\begin{equation}\label{convenient}
	p(x,y,t) \leq \frac{c_{0}}{t^{n/2}}e^{-c_{2}\frac{|x-y|^2}{t}}
		\exp\left\{-c_{1}m_{\beta}(t\,\av_{Z_{\sqrt{t}}(x)} V)^{1/2}\right\}		
\end{equation}
where $c_{i} > 0$ for $i =0,1,2$  and  $m_{\beta}(x) = x$ for $x \leq 1$ and $m_{\beta}(x) = x^{\beta}$ for $x \geq 1$. 
In particular, if $V \in A_{p}$, then one may take $\beta = \frac{2}{2 + n(p - 1)}$.
\end{thm}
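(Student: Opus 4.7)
The plan is to split the estimate into the standard Gaussian factor $e^{-c_2|x-y|^2/t}$ and the new extra-decay factor, handle them separately, and then stitch them together via the heat semigroup. The Gaussian factor itself is essentially classical: conjugating $H$ by a plane wave $e^{\lambda\cdot x}$ produces $H_\lambda = H - 2\lambda\cdot\nabla + |\lambda|^2$, and standard $L^2$-energy estimates for $e^{-tH_\lambda}$ followed by optimization in $\lambda$ yield $p(x,y,t) \le C t^{-n/2}e^{-c_2|x-y|^2/t}$ for any $c_2 < 1/4$; since $V\ge 0$ this goes through exactly as in Davies \cite{Dav89}, with a slight margin to absorb the Gaussian part of any extra weight later.

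The main work is the diagonal weighted estimate $p(x,x,t) \le C t^{-n/2}\exp\{-c_1 m_\beta(t\,\av_{Z_{\sqrt t}(x)}V)^{1/2}\}$. My approach is to use the Feynman--Kac representation $p(x,x,t)=p_0(x,x,t)\,\mathbb{E}^{x,x,t}[\exp(-\int_0^t V(B_s)\,ds)]$ and to estimate the Brownian-bridge expectation using the $A_p$ condition. In dual form, $A_p$ says that if $M=\av_{Z_{\sqrt t}(x)}V$ and $E_\lambda=\{y\in Z_{\sqrt t}(x):V(y)\le\lambda M\}$, then $|E_\lambda|/|Z_{\sqrt t}(x)|\le C\lambda^{1/(p-1)}$. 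Coupling this with a Faber--Krahn-type bound on the probability that the bridge is trapped in a set of small relative volume $\delta$ (which scales like $e^{-c\delta^{-2/n}}$) leads to a two-term estimate
\[
\mathbb{E}^{x,x,t}\bigl[e^{-\int_0^t V(B_s)\,ds}\bigr] \le \exp(-c\lambda^{-2/(n(p-1))}) + \exp(-c\lambda tM).
\]
Optimizing $\lambda$ extracts the exponent $\beta=\tfrac{2}{2+n(p-1)}$ in the large regime $tM\ge 1$, while the small regime $tM\le 1$ follows from a Taylor expansion of the Feynman--Kac exponential and matches the linear branch $m_\beta(x)=x$.

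Once the diagonal bound is in hand, one transfers it off-diagonal. For $|x-y|\les\sqrt t$, $A_\infty$ doubling gives $\av_{Z_{\sqrt t}(x)}V\sim\av_{Z_{\sqrt t}(y)}V$, so the symmetric Cauchy--Schwarz estimate $p(x,y,t)^2\le p(x,x,t)p(y,y,t)$ upgrades the diagonal bound to the off-diagonal one stated in the theorem, with the baseline Gaussian inserted separately. For $|x-y|\ges\sqrt t$ the Gaussian factor already dominates the extra factor, so only the free-heat bound is needed.

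The principal obstacle is the Brownian-bridge step: the Faber--Krahn bound must be adapted to bridges rather than free Brownian motion, and worst-case shapes of the low-$V$ set $E_\lambda$ must be excluded using $A_p$. A cleaner alternative is an $L^2$-energy/Agmon-weight argument directly on $e^{-tH}$, constructing a nonnegative weight $\rho$ with $|\nabla\rho|^2\lesssim V$ and $\rho(x)\sim m_\beta(t\,\av V)^{1/2}$ (the square root in the theorem's exponent is suggestive of this), in the spirit of Kurata \cite{Kur00} but with Shen's mass function $m_V$ replaced by a surrogate built from $t\,\av_{Z_{\sqrt t}(x)}V$ that no longer requires the $RH_{n/2}$ hypothesis.
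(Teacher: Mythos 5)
Your route is genuinely different from the paper's. The paper argues purely at the PDE level: $p(\cdot,y,\cdot)$ is a weak solution of $(\partial_t+H)u=0$, Moser's local boundedness controls $\sup|p|$ by an $L^2$ average over a slightly larger cylinder, and then the Caccioppoli inequality (Lemma~\ref{cacc}) and the Auscher--Ben Ali Fefferman--Phong inequality (Theorem~\ref{ifp}) are alternated on a nested family of cylinders $Q_{\rho_j r}(x,t)$, each pass picking up one factor of $m_\beta(r^2\av_{Z_{2r}(x)}V)^{-1}$. Iterating $k$ times and then optimizing over $k$ produces the $\exp\{-c\,m_\beta(\cdot)^{1/2}\}$ factor; the Gaussian factor is stitched in at the very end by $p=p^{1/2}p^{1/2}$. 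Your proposal replaces all of this by Feynman--Kac and a level-set analysis of $V\in A_p$. That is a legitimate alternative in spirit (and it is essentially how one would \emph{prove} a Fefferman--Phong inequality probabilistically), and your computation of $\beta=\tfrac{2}{2+n(p-1)}$ by balancing $\lambda^{-2/(n(p-1))}$ against $\lambda tM$ is the right heuristic --- in fact, carried out, it would give $\exp\{-c\,m_\beta(\cdot)\}$, \emph{without} the square root, which is formally \emph{stronger} than the paper's bound (the paper loses a $\sqrt{\cdot}$ in the $k$-optimization $\min_k C^k k^{2k}/a^k \sim e^{-c\sqrt a}$).

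However, there are two real gaps, and you flag one of them yourself. First, the occupation-time estimate for the Brownian bridge is the entire content of the argument and is not done. Faber--Krahn controls the probability that the bridge \emph{stays in} $E_\lambda$ for all of $[0,t]$; what you need is that the bridge spends at least half the time in a small set, and Markov's inequality applied to the expected occupation time only yields a polynomial bound $\lesssim\lambda^{1/(p-1)}$, not the exponential $\exp(-c\lambda^{-2/(n(p-1))})$. Upgrading this to an exponential bound (via a time-slicing/renewal argument, or by observing that $\mathbb E[e^{-\int V}]\le \mathbb E[e^{-\lambda M\,\mathrm{occ}(E_\lambda^c)}]$ and estimating the latter through the Dirichlet spectrum of $E_\lambda$) is precisely where the Fefferman--Phong inequality is hiding; the paper imports this as a black box from Auscher--Ben Ali rather than re-proving it. As written, the crux is asserted, not proved. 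Second, the off-diagonal transfer is wrong in the regime $|x-y|\gtrsim\sqrt t$: the claim that ``the Gaussian factor already dominates the extra factor'' fails whenever $V$ is unbounded. With $n=1$, $V(x)=x^2$, $t=1$, $x$ large and $y=0$, the exact Mehler kernel has decay $\exp\{-c(x-y)^2\}\cdot\exp\{-c(x^2+y^2)\}$, and both factors are comparable and independent; the bound you propose in that regime drops the second factor entirely. The Cauchy--Schwarz upgrade $p(x,y,t)^2\le p(x,x,t)p(y,y,t)$ only covers $|x-y|\lesssim\sqrt t$, and a Feynman--Kac treatment of the far off-diagonal needs a chaining/segment-partition argument (as the paper does for the \emph{lower} bound in Section~\ref{proof2}), since the bridge from $x$ to $y$ spends only time $\sim t^{3/2}/|x-y|\ll t$ near $x$. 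The paper sidesteps this entirely because Moser's estimate applies to $p(\cdot,y,\cdot)$ in a cylinder centered at $(x,t)$ with no reference to $y$, so the $V$-dependent decay in $x$ comes out of the iteration for all $x,y$, and only then is the free Gaussian factor appended by $p=p^{1/2}p^{1/2}$.
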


To orient ourselves with how close our result is to being sharp, we provide the explicit heat kernel for 
quadratic polynomial $V$ on $\mathbb{R}$. 
%(Section~\ref{proof} provides a calculation that uses only elementary methods.)
\begin{thm}\label{explicit}
If $V(x)= \sum_{i=0}^2 a_{i}x^{i}$ with $a_{2} > 0$, then the heat kernel $p(x,y,t)$ of the \SCH\ operator $H = -\Delta + V$ on $L^2(\R^n)$ is given by the formula
\begin{align*}
		p(x,y,t) = &\left(\frac{\sqrt{a_{2}} \csch 2\sqrt{a_{2}}t}{2\pi}\right)^{1/2} 
		\, e^{\left(\frac{a_{1}^2}{4a_{2}} - a_{0}\right)t} \\
	&\ \cdot \exp\left\{-\frac{{a_{1}}^2}{4(\sqrt{a_{2}})^{3}}
    (\coth 2\sqrt{a_{2}}t - \csch{2\sqrt{a_{2}}t}) \right\} \\
	&\ \cdot \exp\left\{ -\frac{\sqrt{a_{2}}}{2}\biggl( (x-y)^2\csch{2\sqrt{a_{2}}t } +
			(x^2 + y^2)(\coth{2\sqrt{a_{2}}t} - \csch{2\sqrt{a_{2}}t})\biggr)\right\} \\
	&\ \cdot \exp\left\{-\frac{a_{1}}{2\sqrt{a_{2}}}
		\biggl((x+y)(\coth 2\sqrt{a_{2}}t - \csch{2\sqrt{a_{2}}t})\biggr)\right\}
\end{align*}
for $x,y\in\R^n$ and $t>0$.
\end{thm}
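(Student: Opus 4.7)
Since $V(x) = a_2 x^2 + a_1 x + a_0$ with $a_2 > 0$, the plan is to recognize $H$ as a harmonic oscillator up to a spatial translation and an additive constant, apply the classical Mehler formula for the oscillator heat kernel, and unwind. Completing the square,
$$V(x) = a_2(x+c)^2 + C, \qquad c = \frac{a_1}{2a_2}, \qquad C = a_0 - \frac{a_1^2}{4a_2}.$$
The additive constant pulls out as a scalar factor $e^{-Ct}$; the translation $x \mapsto x + c$ conjugates $-\partial_x^2 + a_2(x+c)^2$ with the pure harmonic oscillator $H_0 := -\partial_x^2 + \omega^2 x^2$, where $\omega = \sqrt{a_2}$. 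Writing $p_0$ for the heat kernel of $H_0$, this gives
$$p(x,y,t) = e^{-Ct}\, p_0(x + c,\, y + c,\, t).$$

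For $H_0$, I would either cite or briefly derive the Mehler kernel: expanding $e^{-tH_0}$ in the Hermite function basis and applying the generating identity
$$\sum_{n=0}^\infty \frac{H_n(\xi) H_n(\eta)}{2^n n!}\, r^n = \frac{1}{\sqrt{1-r^2}} \exp\!\left(\frac{2\xi\eta r - (\xi^2 + \eta^2) r^2}{1 - r^2}\right)$$
with $\xi = \sqrt{\omega}\, x$, $\eta = \sqrt{\omega}\, y$, and $r = e^{-2\omega t}$, followed by the simplifications $e^{\omega t}/\sqrt{1 - e^{-4\omega t}} = (2 \sinh 2\omega t)^{-1/2}$ and $(1 + e^{-4\omega t})/(1 - e^{-4\omega t}) = \coth 2\omega t$, yields
$$p_0(x,y,t) = \left(\frac{\omega\,\csch 2\omega t}{2\pi}\right)^{1/2} \exp\!\left(-\frac{\omega}{2}\bigl[(x^2+y^2)\coth 2\omega t - 2xy\,\csch 2\omega t\bigr]\right).$$

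It remains to substitute $(x+c, y+c)$ into $p_0$ and regroup. The key identity $2(x+c)(y+c) = (x+c)^2 + (y+c)^2 - (x-y)^2$ extracts the $(x-y)^2$ term cleanly:
\begin{align*}
& \bigl[(x+c)^2+(y+c)^2\bigr]\coth 2\omega t - 2(x+c)(y+c)\csch 2\omega t \\
& \qquad = (x-y)^2\csch 2\omega t + \bigl[x^2 + y^2 + 2c(x+y) + 2c^2\bigr](\coth 2\omega t - \csch 2\omega t).
\end{align*}
Multiplying by $-\omega/2$ and restoring $\omega = \sqrt{a_2}$, $c = a_1/(2a_2)$, $C = a_0 - a_1^2/(4a_2)$, the four resulting terms produce, in turn, the $(x-y)^2\csch + (x^2+y^2)(\coth - \csch)$ factor, the $(x+y)(\coth - \csch)$ factor with coefficient $-a_1/(2\sqrt{a_2})$, and the constant $(\coth - \csch)$ factor with coefficient $-a_1^2/(4(\sqrt{a_2})^3)$, all together with the leading scalar $e^{-Ct}$ matching $e^{(a_1^2/(4a_2) - a_0)t}$ and the prefactor $(\sqrt{a_2}\csch 2\sqrt{a_2}t / 2\pi)^{1/2}$ from $p_0$.

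Every step is standard; the main obstacle is the bookkeeping in this unwinding, where each of the four groupings displayed in the theorem must be matched against the correct term in the shifted Mehler exponent. A natural alternative, should the reduction argument feel unsatisfying, is to verify the proposed formula directly by substituting into $\partial_t p = -Hp$ and checking $p(\cdot, y, t) \to \delta_y$ as $t \to 0^+$ (using $\csch 2\omega t, \coth 2\omega t \sim 1/(2\omega t)$ to recover the standard Gaussian heat kernel), but that route involves a much heavier computation.
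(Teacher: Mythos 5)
Your proof is correct, and it takes a genuinely different route from the one the paper actually carries out. The paper explicitly lists the Mehler-kernel-plus-translation route as one of three viable strategies but deliberately passes over it in favor of an ansatz method: after noting that the $a_0$ term factors out as $e^{-a_0 t}$ (which is the same reduction as your constant $C$, just split differently), the paper posits
$$p(x,y,t) = \phi(t)\exp\Bigl\{-\tfrac{1}{2}\bigl(\alpha(t)x^2 + \gamma(t)y^2 + 2\beta(t)xy\bigr)\Bigr\}\exp\bigl\{-\mu(t)x - \nu(t)y\bigr\},$$
substitutes it into $(\partial_t + H)p = 0$, and solves the resulting triangular system of six first-order ODEs for $\alpha,\beta,\gamma,\mu,\nu,\phi$, then fixes integration constants from the $\delta$-function initial condition. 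That approach is self-contained, requiring only elementary ODE solving, but it buries the structure of the answer in the algebra of the ODE solutions. Your approach of completing the square to write $V = a_2(x+c)^2 + C$, conjugating by translation to reduce to the pure oscillator, and invoking Mehler's formula is more transparent: it makes clear from the outset why the answer has exactly the Mehler prefactor $(\sqrt{a_2}\csch 2\sqrt{a_2}t/2\pi)^{1/2}$, and the $(x-y)^2$ grouping via $2(x+c)(y+c) = (x+c)^2 + (y+c)^2 - (x-y)^2$ isolates each displayed factor of the theorem cleanly. The tradeoff is that you must either cite the Mehler kernel or derive it from the Hermite generating identity, which imports more machinery than the paper was willing to assume; that is precisely the paper's stated reason for preferring the ansatz. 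Your substitution bookkeeping checks out: the coefficients $-\sqrt{a_2}/2$, $-a_1/(2\sqrt{a_2})$, $-a_1^2/(4(\sqrt{a_2})^3)$, and the prefactor $e^{(a_1^2/(4a_2)-a_0)t}$ all emerge correctly from $\omega = \sqrt{a_2}$, $c = a_1/(2a_2)$, $C = a_0 - a_1^2/(4a_2)$. One small note: the theorem as stated says $L^2(\R^n)$, but the formula and the potential $V(x) = \sum_{i=0}^2 a_i x^i$ are one-dimensional; you correctly treat the problem on $\R$, which is what is meant.
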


First, recall the asymptotics 
\begin{align*}
	\bullet \quad & \csch(t) \sim t^{-1} \textrm{ as } t \to 0^{+} \textrm{ and } \csch(t) \sim e^{-t} 
	\textrm{ as } t \to +\infty \\
	\bullet \quad &  (\coth(t) - \csch(t)) \sim t \textrm{ as } t \to 0^{+} \textrm{ and } 
	(\coth(t) - \csch(t)) \sim 1 \textrm{ as } t \to +\infty,
\end{align*}
So a sharp upper bound for the above formula is essentially 
\begin{equation}\label{dream}
p(x,y,t) \lesssim \left\{\begin{array}{ll}
			t^{-1/2}e^{-c_{0}\frac{|x-y|^2}{t}}
	\exp\left\{-c_{1}t(x^2 + y^2)\right\} & t \leq 1  \\
			e^{-c_{2}t}\exp\left\{-c_{3}(x^2 + y^2)\right\}& t > 1
		\end{array}\right.
\end{equation}
To compare this to Theorem~\ref{bus}, we rewrite \eqref{convenient} to 
include a decay term in $y$,
\begin{align}
p(x,y,t) &= p(x,y,t)^{1/2}\cdot p(y,x,t)^{1/2} \nonumber \\
 &\lesssim t^{-1/2}e^{-c_{1}\frac{|x-y|^2}{t}}
		\exp\left\{-c_{2}\left[m_{\beta}(t\,\av_{Z_{\sqrt{t}}(x)} V)^{1/2}
		+ m_{\beta}(t\,\av_{Z_{\sqrt{t}}(y)} V)^{1/2}\right]\right\}.	\label{bothxy}
\end{align}
Taking $n=1$ and assuming quadratic $V(x)$, we have
$$
\av_{Z_{\sqrt{t}}(x)}V = \frac{1}{\sqrt{t}}\int_{x-\frac{1}{2}\sqrt{t}}^{x+\frac{1}{2}\sqrt{t}}V(z)\,dz 
 = a_{2}\left(x^2 + \frac{t}{12}\right) + a_{1}x + a_{0}.
$$
Thus we see that \eqref{bothxy} is no sharper than a bound of
$$
p(x,y,t) \lesssim t^{-1/2}e^{-c_{1}\frac{|x-y|^2}{t}}
		\exp\left\{-c_{2}\left[m_{\beta}(t^{1/2}|x| + t)
		+ m_{\beta}(t^{1/2}|y| + t)\right]\right\}.
$$
When all of the three terms $\{\sqrt{t}, |x|, |y|\}$ are small, the Gaussian factor will essentially
determine the size of both the above, and of \eqref{dream}. But when $t > 1$ and say $|x|$
is the dominant term, our upper bound can be no sharper than $\exp\left\{-c |x|^{2\beta}\right\}$, 
while \eqref{dream} will have decay of the order $\exp\left\{-cx^2\right\}$. 
Hence the presence
of the sublinear function $m_{\beta}$ prevents our estimates from being attained.  However, if
$V(x)$ is a strictly positive polynomial, $\beta$ may be taken arbitrarily close to 1 since positive 
polynomials belong to $RH_{\infty}$.

Before moving to the proof of Theorem~\ref{bus}, we also 
take a moment to sketch its strategy,  
which uses ideas of Shen \cite{she95}. 
Fix $y \in \mathbb{R}^n$, and 
look at $p = p(\cdot,y,t)$ in a cylinder $Q \subset \mathbb{R}^n \times
(0,\infty)$. Since $p$ is a weak solution to $(\partial_t + H)u = 0$
in this cylinder,
Moser's work on local boundedness implies $\sup_{Q}p$ is dominated by
its $L^2$ norm over a slightly larger cylinder. Now take  
an increasing sequence of cylinders beginning with $Q$. Given 
appropriate Fefferman-Phong and Caccioppoli type inequalities, 
we can alternate upper bounds
for $p$ in terms of its $L^2$ energy and $L^2$ norm over this sequence
of cylinders. 
Each iteration introduces another factor of a $V$-dependent weight in 
the evolving upper bound. When we conclude the iteration by applying the
Gaussian bound on $p$, our result is extra-Gaussian decay in 
terms of $V$. 

So there are three main ingredients: local boundedness, a Fefferman-Phong
inequality valid for $V \in A_\infty$, and a Caccioppoli
inequality for weak solutions to $(\partial_t + H)u = 0$. We
introduce these items in Subsections~\ref{lb}, \ref{fp}, and \ref{ci}, 
respectively; then combine them for the proof in Section~\ref{proof}.

\subsection{Local boundedness}\label{lb}
Denote cylinders in $\mathbb{R}^n\times(0,\infty)$ with the notation,
$$Q_r(x_{0}, t_{0}) 
	=  B(x_{0},r) \times I_{t_{0},r} = B(x_{0},r) \times (t_{0} - r^{2}, t_{0})$$
And consider \emph{weak solutions} of $(\partial_{t} + H)u = 0$ as below,
\begin{defn}\label{weakly}A real-valued function $u(x,t)$ is a weak solution to
$(\partial_{t} + H)u = 0$ in $Q_r(x_{0}, t_{0})$ if $u \in L^{\infty}(L^{2}(B(x_{0},r)); I_{t_{0}, r}) \cap L^{2}(H^{1}(B(x_{0},r)); I_{t_{0},r})$
satisfies
\begin{multline}\label{ws}
		\int_{B(x_{0},r)}u(x,t)\phi(x,t)\,dx - \int_{t_{0}-r^{2}}^{t}\int_{B(x_{0},r)}
			u(x,s)\partial_{s}\phi(x,s)\,dx\,ds \\
			+ \int_{t_{0}-r^{2}}^{t}\int_{B(x_{0},r)} \bigl(\nabla u(x,s) \cdot \nabla \phi(x,s)
			+ V(x) u(x,s)\phi(x,s)\bigr)\,dx\,ds = 0
\end{multline}
for $t_{0}-r^{2} < t \leq t_{0}$ and for every $\phi(x,s) \in \mathcal{C}$, where
$$\mathcal{C} = \{\phi \in L^{2}(H^{1}(B(x_{0},r)); I_{t_{0},r}) \textrm{ and }
	\partial_{s}\phi \in L^{2}(L^{2}(B(x_{0},r); I_{t_{0},r}); \phi(x, r_{0}-r^{2})) = 0\}$$
\end{defn} 
In this setting Moser established the following fundamental result, 
which applies to $p(\cdot,y,t)$ because it is a weak solution of 
$(\partial_{t} + H)u = 0$ on every such cylinder (see \cite{Bal77}). 
\begin{thm}[Moser]\label{wow} Let $u \geq 0$ be a weak solution of $(\partial_{t} + H)u = 0$
in $Q_{2r}(x_{0}, t_{0})$. There exists $C > 0$, depending only on $n \geq 1$, such that
\begin{equation}\label{myman}
	\sup_{Q_{r/2}(x_{0}, t_{0})} |u(x,t)| \leq
		\biggl(\frac{C}{r^{n+2}}\iint_{Q_{2r/3}(x_{0},t_{0})} |u|^{2} \,dx\,dt\biggr)^{1/2}
\end{equation}
\end{thm}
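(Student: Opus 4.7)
The plan is to run a standard parabolic Moser iteration on $u$, exploiting the fact that $V \geq 0$ means the potential contribution can be dropped to the favorable side of the energy inequality, so the argument reduces essentially to the case $V = 0$. Fix a decreasing sequence of radii $r_k \searrow r/2$ with $r_0 = 2r/3$, set $Q_k = Q_{r_k}(x_0,t_0)$, and pick smooth spacetime cutoffs $\eta_k$ adapted to the pair $(r_{k+1}, r_k)$ with $|\nabla \eta_k| \les 1/(r_k - r_{k+1})$, $|\partial_t \eta_k| \les 1/(r_k - r_{k+1})^2$, $\eta_k \equiv 1$ on $Q_{k+1}$, and $\eta_k \equiv 0$ on the parabolic boundary of $Q_k$.

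First I would derive the Caccioppoli-type energy inequality. For each $p \geq 2$, use $\phi = \eta_k^2 u^{p-1}$ as the test function in the weak formulation \eqref{ws}. The term $\iint V \eta_k^2 u^p \, dx\, dt$ is nonnegative and is simply discarded. Expanding $\nabla u \cdot \nabla(\eta_k^2 u^{p-1})$, distributing a factor through Young's inequality, and rewriting the time derivative as $\frac{1}{p}\partial_t(u^p)$ produces
\[
\sup_{t \in I_{t_0, r_{k+1}}}\!\int_{B(x_0, r_{k+1})} u^p(\cdot, t)\,dx + \iint_{Q_{k+1}} |\nabla(u^{p/2})|^2\,dx\,dt \les \frac{p^2}{(r_k - r_{k+1})^2} \iint_{Q_k} u^p\,dx\,dt.
\]
Second I would invoke the parabolic Sobolev embedding, which interpolates $L^\infty_t L^2_x$ and $L^2_t \dot H^1_x$ control of $\eta_k u^{p/2}$ into an $L^{2\kappa}(Q_{k+1})$ bound with $\kappa = 1 + 2/n$, giving the reverse-Hölder step
\[
\|u\|_{L^{p\kappa}(Q_{k+1})} \leq \Bigl(\frac{C p^2}{(r_k - r_{k+1})^2}\Bigr)^{1/p} \|u\|_{L^p(Q_k)}.
\]
Iterate with $p_k = 2\kappa^k$ and $r_k - r_{k+1} \sim r\,2^{-k}$. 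Because $\sum k/\kappa^k$ converges, so does the product of the constants; a short calculation shows the accumulated factor is comparable to $r^{-(n+2)/2}$, yielding $\|u\|_{L^\infty(Q_{r/2})} \les r^{-(n+2)/2} \|u\|_{L^2(Q_{2r/3})}$, which is \eqref{myman} after squaring.

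The main technical obstacle is legitimizing the nonlinear test function $\phi = \eta_k^2 u^{p-1}$, which is not obviously in the admissible class $\mathcal{C}$ when $p > 2$, since \emph{a priori} $u$ only lies in $L^\infty_t L^2_x \cap L^2_t H^1_x$. The standard workaround is to truncate, taking $u_N = \min(u, N)$ and testing with $\eta_k^2 u_N^{p-2} u$, combined with a Steklov time-average to justify integration by parts in the time derivative, then sending $N \to \infty$ by monotone convergence. The base step $p = 2$ (where the test function $\eta_k^2 u$ is unambiguously admissible) produces $u \in L^{2\kappa}_{\textrm{loc}}$, and this bootstrap guarantees that at each stage $u$ lies in a sufficiently high $L^{p_k}_{\textrm{loc}}$ space for the next iterate to make sense.
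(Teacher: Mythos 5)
Your proposal is correct and rests on the same key observation the paper uses: because $V \geq 0$ (and $u \geq 0$), the potential term in the weak formulation has a favorable sign and may be discarded, which is equivalent to the paper's remark that $u$ is a weak subsolution of the free heat equation $(\partial_t - \Delta)u = 0$. The only difference is expository: the paper exploits this observation by citing Moser's Theorem~1 of \cite{Mos64} (adjusting the cylinder geometry and using translation and parabolic-scaling invariance to reduce to $r = 1$), while you re-derive the parabolic Moser iteration in full, including the standard truncation and Steklov-averaging device needed to legitimize the nonlinear test functions.
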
 
\begin{proof}[Sketch of proof]
We give a very precise reference because this fact is so basic for us. 
Suppose $Q_{2r}(x_{0}, t_{0}) = B_2(0) \times (0, 4)$. Note 
that because $V \geq 0$,
$u$ is a weak subsolution of $(\partial_{t} - \Delta)u=0$ in $Q_{2}(0,4)$. So subsolution
estimates for the heat equation apply. In particular, a slight modification
in the geometry of Moser's Theorem~1 in \cite{Mos64} 
(see especially pp.\ 124-125) establishes
$$
	\sup_{Q_{1/2}(0, 4)} |u(x,t)| \leq
		\biggl(C\iint_{Q_{2/3}(0,4)} |u|^{2} \,dx\,dt\biggr)^{1/2}
$$
Translation invariance of the heat equation then implies the lemma
with $r=1$, and the result for arbitrary $r > 0$ follows from invariance of the heat
equation under the scaling $x \to r x$, $t \to r^2 t$.
\end{proof}

\subsection{A Fefferman-Phong inequality}\label{fp}
Next we consider how to trade an $L^2$ 
norm bound like \eqref{myman} for an $L^2$ energy bound 
by introducing a $V$-dependent weight. 
What follows is the $p=2$ case of Auscher and Ben Ali's ``improved Fefferman-Phong inequality'' from \cite{AuBA07}.
\begin{thm}[Auscher, Ben Ali] \label{ifp} Suppose $V \in A_{\infty}$. Then there are constants $C > 0$ and
$\beta \in (0,1)$, depending only on $n  \geq 1$ and the \Ainf\ constant of $V$, such that for any cube
$Z = Z_{r}(x_{0})$ and $u \in C^{1}(\mathbb{R}^{n})$ one has
\begin{equation}
	\label{eq:ifp}
	\int_{Z}|\nabla u|^{2} + V\,|u|^{2}\,dx
		\geq C\frac{m_{\beta}(r^{2}\,\av_{Z} V)}{r^{2}}\int_{Z} |u|^{2}\,dx
\end{equation}
where $m_{\beta}(x) = x$ for $x \leq 1$ and $m_{\beta}(x) = x^{\beta}$ for $x \geq 1$. 
In particular, if $V \in A_{p}$, then one may take $\beta = \frac{2}{2 + n(p - 1)}$.
\end{thm}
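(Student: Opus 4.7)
The plan is to separate the inequality into the two regimes $r^2 \av_Z V \leq 1$ and $r^2 \av_Z V > 1$, corresponding to the two pieces of $m_\beta$, and then to reduce the large regime to the small one via a Calder\'on--Zygmund stopping-time decomposition.

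In the small regime, where the target reduces to $\int_Z |\nabla u|^2 + V|u|^2 \, dx \geq C(\av_Z V)\int_Z |u|^2\,dx$, I would split $u = \bar u + v$ with $\bar u$ the mean of $u$ on $Z$ and $v = u - \bar u$. Writing $\int_Z |u|^2 \leq 2|Z|\bar u^2 + 2\int_Z |v|^2$, Poincar\'e bounds the second piece by $Cr^2 \int_Z |\nabla u|^2$, which is compatible with the target because $\av_Z V \leq r^{-2}$ in this regime. For the constant piece, the elementary inequality $(a+b)^2 \geq \tfrac{1}{2}a^2 - b^2$ gives
\[
\int_Z V|u|^2\,dx \geq \tfrac{1}{2}\bar u^2 \int_Z V\,dx - \int_Z V |v|^2\,dx,
\]
and the error $\int_Z V|v|^2$ is controlled by H\"older with the reverse H\"older condition $V \in RH_q$ (automatic from $V \in A_\infty$) combined with Sobolev--Poincar\'e on $v$; the smallness hypothesis $r^2\av_Z V \leq 1$ then allows the resulting term to be absorbed into $\int_Z |\nabla u|^2$.

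In the large regime $r^2 \av_Z V > 1$, I would perform a stopping-time decomposition: subdivide $Z$ dyadically and select the maximal dyadic subcubes $\{Z_j\}$ on which $r_j^2 \av_{Z_j} V \leq 1$. Doubling for $A_\infty$ weights forces $r_j^2 \av_{Z_j} V$ to be comparable to a fixed positive constant on each $Z_j$, and Lebesgue differentiation guarantees that $\{Z_j\}$ covers $Z$ up to a null set. Applying the small-regime estimate on each $Z_j$ and summing produces
\[
\int_Z |\nabla u|^2 + V|u|^2 \, dx \geq C \sum_j r_j^{-2} \int_{Z_j} |u|^2 \, dx.
\]

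The main obstacle is converting this disorganized sum over many scales into a bound with the sharp exponent $\beta = 2/(2+n(p-1))$. My plan here is to invoke the quantitative $A_p$ condition, which yields $\av_Z V \leq C(\av_{Z_j} V)(|Z|/|Z_j|)^{p-1}$, so that $r_j^{-2} \geq c\,r^{-2}(r^2 \av_Z V)(|Z_j|/|Z|)^{p-1}$. Interpolating this inequality with the trivial bound $r_j^{-2} \geq r^{-2}$ by raising to a power $\theta \in (0,1)$ and summing over $j$ using H\"older/Jensen, I would arrange the weights so the $(r^2 \av_Z V)^\theta$ factor survives while the sum of $(|Z_j|/|Z|)^{\theta(p-1)}|Z_j|$-type contributions stays bounded by $|Z|$; the balancing constraint is exactly $\theta(p-1) + 1 = 1/\theta$ (in dimension-adjusted form), which solves to $\theta = \beta = 2/(2+n(p-1))$. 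The subtlety is that this argument must be carried out uniformly in how $|u|^2$ is distributed among the cubes $Z_j$, forcing the specific exponent $\beta$ rather than a smaller power.
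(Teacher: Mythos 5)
The paper does not actually prove Theorem~\ref{ifp}: it is stated verbatim and attributed to Auscher and Ben Ali as the $p=2$ case of their improved Fefferman--Phong inequality in \cite{AuBA07}, with no internal argument to compare against. So your proposal has to stand on its own. The two-regime split plus Calder\'on--Zygmund stopping time is the right skeleton, but both halves have genuine problems.

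In the small regime, the H\"older--plus--Sobolev--Poincar\'e control of $\int_Z V|v|^2$ requires putting $|v|^2$ in $L^{q'}(Z)$ with $2q' \leq 2n/(n-2)$, i.e.\ $q \geq n/2$ when $n \geq 3$. Membership in $A_\infty$ only guarantees $V \in RH_q$ for \emph{some} $q>1$, and reverse-H\"older self-improvement does not reach $n/2$; so this step can fail in the full generality the theorem claims. A fix that avoids Sobolev altogether: set $A = \{x \in Z : |u(x)| \geq |\bar u|/2\}$. If $\int_Z|\nabla u|^2 \geq \ep_0 r^{-2}\bar u^2 |Z|$, then $\bar u^2 V(Z) \leq \ep_0^{-1}(r^2\av_Z V)\int_Z|\nabla u|^2 \leq \ep_0^{-1}\int_Z|\nabla u|^2$. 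Otherwise Chebyshev on $v$ together with Poincar\'e gives $|Z\setminus A| \leq C\ep_0|Z|$, and the quantitative $A_\infty$ property (small relative Lebesgue measure implies small relative $V\,dx$ measure) forces $V(A) \geq \delta V(Z)$, whence $\int_Z V|u|^2 \geq \tfrac{1}{4}\bar u^2 V(A) \gtrsim \bar u^2 V(Z)$. Either way $\bar u^2 V(Z) \lesssim \int_Z|\nabla u|^2 + V|u|^2$ with no reverse-H\"older exponent constraint.

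In the large regime, the interpolation you describe is both unnecessary and does not deliver the stated exponent: your balancing constraint $\theta(p-1)+1 = 1/\theta$ does not solve to $\beta = 2/(2+n(p-1))$ (take $n=1$, $p=2$: it gives $\theta = (\sqrt{5}-1)/2$, not $2/3$). The correct observation is simpler. The stopping rule together with doubling pins $r_j^2\av_{Z_j}V$ into a fixed interval, so $\av_{Z_j}V \approx r_j^{-2}$. The $A_p$ estimate $\av_Z V \leq C\,\av_{Z_j}V\,(|Z|/|Z_j|)^{p-1} \approx r_j^{-2}(r/r_j)^{n(p-1)}$ then gives $r^2\av_Z V \leq C(r/r_j)^{2+n(p-1)}$, hence $r_j^{-2} \geq c\, r^{-2}(r^2\av_Z V)^{2/(2+n(p-1))}$ \emph{uniformly} in $j$. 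Plugging this uniform lower bound into each per-cube Fefferman--Phong estimate and summing over the $Z_j$ (which tile $Z$ up to a null set) yields the claim with $\beta = 2/(2+n(p-1))$ directly, with no need to worry about how $|u|^2$ is distributed among the stopping cubes.
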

The name comes from Fefferman and Phong's ``Main Lemma'' in \cite{Fef83} 
for polynomial potentials $V$ on $\mathbb{R}^n$, which concludes 
$\int_Z |\nabla u|^2 + V|u|^2\,dx \gtrsim R^{-2}\int_{Z}|u|^2\,dx$
for reasonable functions $u(x)$. As we will show, Auscher and Ben Ali's work provides just the
generalization we need to sharpen Moser's bound \eqref{myman} to include
effects of $V$. 

\subsection{A Caccioppoli inequality}\label{ci} Broadly speaking, a Caccioppoli inequality bounds the local energy of a weak solution 
to an elliptic or parabolic equation by its $L^2$ 
norm over a slightly larger set. This is what we need for the third ingredient
of our iteration. The version we need also appears as Lemma 3 in \cite{Kur00}; 
we state it here and provide a proof in Section~\ref{proof} for completeness.
\begin{lmm}\label{cacc}
Fix $\sigma \in (0,1)$. If $u$ is a weak solution to $(\partial_{t} + H)u = 0$ in
$Q_{2r}(x_{0}, t_{0})$, then there exists $C > 0$, depending only on $n \geq 1$, such that
\begin{multline*}
	\sup_{t_{0} - (\sigma r)^{2} \leq t \leq t_{0}} \int_{B(x_{0}, \sigma r)} |u(x,t)|^{2}\,dx\
		 +  \iint_{Q_{\sigma r}(x_{0}, t_{0})}\biggl(|\nabla u|^{2} + V\,|u|^{2}\biggr)\,dx\,ds \\
	\leq \frac{C}{(1-\sigma)^{2} r^{2}}\iint_{Q_{r}(x_{0},t_{0})} |u|^{2}\,dx\,dt
\end{multline*}
\end{lmm}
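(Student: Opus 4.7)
The plan is to carry out the standard parabolic Caccioppoli argument: test the weak-solution identity \eqref{ws} against $\phi = \eta^2 u$, where $\eta \in C_c^\infty(\R^n \times \R)$ is a cutoff supported in $Q_r(x_0, t_0)$, identically $1$ on $Q_{\sigma r}(x_0, t_0)$, vanishing at the bottom slice $s = t_0 - r^2$, and satisfying the standard bounds $|\nabla_x \eta| \les \frac{1}{(1-\sigma)r}$ and $|\p_s \eta| \les \frac{1}{(1-\sigma)^2 r^2}$. By construction $\phi$ lies in the admissible class $\mathcal{C}$ of Definition~\ref{weakly}, modulo the time-regularity issue discussed at the end.

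For the elliptic part of \eqref{ws}, I would expand $\nabla(\eta^2 u) = \eta^2 \nabla u + 2\eta u \nabla \eta$ and apply Young's inequality $2|\eta u\,\nabla u \cdot \nabla \eta| \leq \tfrac{1}{2}\eta^2 |\nabla u|^2 + 2 u^2 |\nabla \eta|^2$ to produce the pointwise lower bound
\[
\nabla u \cdot \nabla(\eta^2 u) + V\eta^2 u^2 \geq \tfrac{1}{2}\eta^2\bigl(|\nabla u|^2 + 2V u^2\bigr) - 2 u^2 |\nabla \eta|^2.
\]
For the parabolic term, the formal identity $\eta^2 u\,\p_s u = \tfrac{1}{2}\eta^2 \p_s(u^2)$ combined with integration by parts in $s$---the boundary term at $s = t_0 - r^2$ vanishing by the choice of $\eta$---converts the $u\,\p_s\phi$ contribution into $\tfrac{1}{2}\int_{B(x_0,r)} \eta^2 u^2(x,t)\,dx$ at the top slice, plus a controllable lower-order piece $\iint \eta\,\p_s\eta\, u^2\,dx\,ds$. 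Combining these, I would arrive at
\[
\tfrac{1}{2}\int_{B(x_0,r)} \eta^2 u^2(x,t)\,dx + \tfrac{1}{2}\iint_{Q_r} \eta^2\bigl(|\nabla u|^2 + V u^2\bigr)\,dx\,ds \leq C\iint_{Q_r}\bigl(|\nabla \eta|^2 + |\p_s\eta|\bigr) u^2\,dx\,ds,
\]
whose right side is at most $\frac{C}{(1-\sigma)^2 r^2}\iint_{Q_r} u^2\,dx\,dt$ by the derivative bounds on $\eta$. Taking $\sup_{t \in [t_0 - (\sigma r)^2, t_0]}$ of the first term on the left and restricting the integration region in the second term to $Q_{\sigma r}$ would then deliver the lemma.

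The only real obstacle is technical: the weak-solution hypothesis supplies $u \in L^\infty(L^2) \cap L^2(H^1)$ but no classical time differentiability, so the step $\eta^2 u\,\p_s u = \tfrac{1}{2}\eta^2 \p_s(u^2)$ is not literally admissible. I would handle this in the usual fashion via Steklov averages $u_h(x,t) = \frac{1}{h}\int_t^{t+h} u(x,s)\,ds$, for which $\p_s u_h$ exists in $L^2$; one verifies that $u_h$ satisfies a regularized version of \eqref{ws}, runs the computation above for $u_h$, and then sends $h \to 0^+$ using $u_h \to u$ in $L^2_s H^1_x$ locally and $u_h(\cdot,t) \to u(\cdot,t)$ in $L^2_x$ for a.e.\ $t$. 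Once this regularization is in place, every remaining estimate is elementary.
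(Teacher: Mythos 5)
Your argument is correct and follows essentially the same path as the paper: test the weak formulation against $\eta^2 u$ (the paper splits the cutoff into a spatial factor $\chi(x)$ and a temporal factor $\eta(s)$ and arrives at the two halves of the estimate in two passes, but that is organizational rather than substantive), integrate by parts in time, and absorb the cross term via Young's inequality. The paper handles the time-regularity issue by citing the argument in \S 9 of Aronson to assume $\partial_t u \in L^2$, whereas you invoke Steklov averages; both are standard and interchangeable devices for justifying the identity $\eta^2 u\,\partial_s u = \tfrac12\eta^2\partial_s(u^2)$.
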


\section{Proof of The Upper Bound}\label{proof}

We hope that the reader has gained some intuition for how these three 
main ingredients might be combined, and provide the details next.

\begin{proof}[Proof of Theorem \ref{bus}] Fix $y \in \mathbb{R}^{n}$, and 
focus on the cylinder $Q_{r}(x,t)$ with $r = \sqrt{t/8}$. Write $u(\cdot,s) = p(\cdot,y,s)$ so that $u$ is a weak solution to $(\partial_{s} + H)u=0$ 
in $Q_{2r}(x,t)$. We will define an increasing sequence of cylinders that starts with $Q_{2/3r}(x,t)$. 
In particular, choose $k \in \mathbb{N}$ and define
$$ \rho_{j} = \frac{2}{3} + \biggl(\frac{j-1}{k}\biggr)\frac{1}{3} \qquad\qquad \text{for}\quad  j = 1,2,\ldots ,k+1.$$
These $\rho_{1},\ldots ,\rho_{k+1}$ are a sequence of $k$ scaling factors increasing from $\rho_{1}=2/3$ to
$\rho_{k+1} =1$. For each $j = 2,\ldots ,k+1$, also define nonnegative cutoff functions
$\chi_{j}(z) \in C_{0}^{\infty}(B(x, \rho_{j}r))$ and $\eta_{j}(s) \in C^{\infty}(\mathbb{R})$,
bounded by 1 and satisfying
\begin{align*}
	\quad \bullet \quad & \chi_{j} \equiv 1 \text{ on } B(x,\rho_{j-1} r),\
			|\nabla\chi_{j}| \leq \frac{Ck}{r} \\
	\quad \bullet \quad &  \eta_{j} \equiv 0 \text{ for } t \leq t_{0}-(\rho_{j}r)^{2},\
			\eta_{j} \equiv 1 \text{ for } t \geq t_{0} - (\rho_{j-1} r)^{2},\
			|\eta_{j}'| \leq \frac{Ck}{r^{2}}.		
\end{align*}
Note in particular that $\spt{\chi_{j}\eta_{j}} \subset B(x,r) \times [t_{0}-r^2, \infty)$.

Consider one of these cylinders 
$Q_{\rho_{j+1}r}(x,t)$, where $j = 1,\dots ,k$. Take the radius $r$ in Lemma~\ref{cacc} to be our
 $\rho_{j+1}r$; and take the scaling factor $\sigma$ in Lemma~\ref{cacc} to be  $\frac{\rho_{j}}{\rho_{j+1}}$. Then applying the Caccioppoli inequality,
$$
\iint_{Q_{\rho_{j+1}r}(x, t)}\biggl(|\nabla u|^2\chi_{j+1}^2\eta_{j+1}^2 +
	Vu^2\chi_{j+1}^2\eta_{j+1}^2\biggr)\,dz\,ds 
	 \leq \frac{Ck^2}{r^2}\iint_{Q_{\rho_{j+1}r}(x,t)}|u|^2\,dz\,ds.
$$
And from Cauchy's inequality
$$|\nabla(\eta_{j+1} u\chi_{j+1})|^2 \leq 2|\nabla u|^2\eta_{j+1}^2\chi_{j+1}^2
	+ 2|u|^2|\nabla\chi_{j+1}|^2\eta_{j+1}^2.$$
So using the bounds on $|\nabla\chi_{j+1}|$ and $\eta_{j+1}$, increasing $C$ as
necessary, we also have
\begin{multline}\label{five}
\int_{t-(\rho_{j+1}r)^2}^{t}\int_{B_{\rho_{j+1}r}(x)}\biggl(|\nabla(\eta_{j+1} u\chi_{j+1})|^2  +
	V|u|^2\chi_{j+1}^2\eta_{j+1}^2\biggr)\,dz\,ds \\
	 \leq \frac{Ck^2}{r^2}\iint_{Q_{\rho_{j+1}r}(x,t)}|u|^2\,dz\,ds.
\end{multline}

Now note that on the left-hand side of this inequality, we may apply the Fefferman-Phong inequality \eqref{eq:ifp} to the integral in the space 
directions. We do this on a cube containing the support
of $\chi_{j+1}$, namely $Z_{2r}(x)$, 
\begin{align*}
\int_{B_{\rho_{j+1}r}(x)}\biggl(|\nabla(\eta_{j+1} u\chi_{j+1})|^2 & +
	V|u|^2\chi_{j+1}^2\eta_{j+1}^2\biggr)\,dz \\
	&= \int_{Z_{2r}(x)}\biggl(|\nabla(\eta_{j+1} u\chi_{j+1})|^2  +
	V|u|^2\chi_{j+1}^2\eta_{j+1}^2\biggr)\,dz \\
	& \geq\frac{C}{r^{2}}m_{\beta}(r^2\,\av_{Z_{2r}(x)}V)
		\int_{B_{\rho_{j+1}r}(x)}|\eta_{j+1}u\chi_{j+1}|^2\,dz.
\end{align*}
Combined with \eqref{five}, this implies
$$
	\int_{t-(\rho_{j+1}r)^2}^{t} \frac{m_{\beta}(r^2\,\av_{Z_{2r}(x)}V)}{r^{2}}
		\int_{B_{\rho_{j+1}r}(x)}|\eta_{j+1}u\chi_{j+1}|^2\,dz\,ds 
	 \leq \frac{Ck^2}{r^2}\iint_{Q_{\rho_{j+1}r}(x,t)}|u|^2\,dz\,ds,
$$
and hence
$$
\iint_{Q_{\rho_{j+1}r}(x,t)}|\eta_{j+1}u\chi_{j+1}|^2\,dz\,ds 
	\leq \frac{Ck^{2}}{m_{\beta}(r^2\,\av_{Z_{2r}(x)}V)}
			\iint_{Q_{\rho_{j+1}r}(x,t)}|u|^2\,dz\,ds. $$

In other words, combining Lemma~\ref{cacc} and Theorem~\ref{ifp} 
lets us directly relate the $L^2$ norm of $u$ at the the $\rho_{j}$ scaling to the
$\rho_{j+1}$ scaling,
\begin{align*}
	\iint_{Q_{\rho_{j}r}(x,t)}|u|^{2}\,dz\,ds & 
	\leq \iint_{Q_{\rho_{j+1}r}(x,t)}|\eta_{j+1}u\chi_{j+1}|^2\,dz\,ds  \\
		& \leq \frac{Ck^{2}}{m_{\beta}(r^2\,\av_{Z_{2r}(x)}V)}
			\iint_{Q_{\rho_{j+1}r}(x,t)}|u|^2\,dz\,ds.
\end{align*}
Starting with $j=1$ and iterating this relation $k$ times yields 
$$\iint_{Q_{2r/3}(x,t)} |u|^{2}\,dz\,ds \leq
	\frac{C^k k^{2k}}{m_{\beta}(r^2\,\av_{Z_{2r}(x)}V)^k}
			\iint_{Q_{r}(x,t)}|u|^2\,dz\,ds $$
Which substituted into Moser's estimate \eqref{myman} yields
\begin{equation}\label{anes}
\sup_{(z,s) \in Q_{r/2}(x, t)}|u| \lesssim
		\frac{C^{k/2} k^{k}}{m_{\beta}(r^2\,\av_{Z_{2r}(x)}V)^{k/2}}
			\biggl(\frac{1}{r^{n+2}}\iint_{Q_{r}(x,t)}|u|^2\,dz\,ds\biggr)^{1/2}  
\end{equation}
with the suppressed constant independent of $k$. By \cite[Proposition 3.1]{BoRa13h}, it follows immediately that with $c_1$ in that proposition as $(eC)^{-1}$,
$$
\sup_{(z,s) \in Q_{r/2}(x, t)}|u| \leq c_{0}
	\exp\left\{-c_{1}m_{\beta}(r^2\,\av_{Z_{2r}(x)}V)^{1/2}\right\} 
	\biggl(\frac{1}{r^{n+2}}\iint_{Q_{r}(x,t)}|u|^2\,dz\,ds\biggr)^{1/2},
$$
and recalling that $r = \sqrt{t/8}$ and $u(\cdot,s) = p(\cdot,y,s)$
\begin{equation}\label{some}
p(x,y,t) \leq c_{0}
	\exp\left\{-c_{1}m_{\beta}(t\,\av_{Z_{\sqrt{t/2}}(x)}V)^{1/2}\right\} 
	\biggl(\frac{1}{t^{(n+2)/2}}\iint_{Q_{\sqrt{t/8}}(x,t)}|p|^2\,dz\,ds\biggr)^{1/2}.
\end{equation}
Because $A_{\infty}$ potentials are doubling, in \eqref{some} 
we can now replace the average of $V$ over $Z_{\sqrt{t/2}}(x)$ with its average over $Z_{\sqrt{t}}(x)$, scaling $c_{1}$ appropriately.  

The final step is to incorporate the Gaussian bound on the heat kernel
\begin{equation}\label{lap}
p(x,y,t) \lesssim t^{-n/2}\exp(-|x-y|^2/4t).
\end{equation}
In particular, if $|x-y| \approx \sqrt{t}$, we lose nothing by using $p \lesssim t^{-n/2}$ inside the integral 
in \eqref{some}. Because $|Q_{r}| \approx  r^{n+2}$, this gives
\begin{equation}\label{hhh}
	p(x,y,t) \leq \frac{c_{0}}{t^{n/2}}\exp\left\{-c_{1}m_{\beta}(t\,\av_{Z_{\sqrt{t}}(x)}V)^{1/2}\right\}.
\end{equation}
On the other hand, if $\sqrt{t} \ll |x-y|$ then $p \lesssim t^{-n/2}$ is a very poor estimate. We would
be better off just using \eqref{lap} directly. So the upper bound of the theorem is a compromise that follows 
from writing 
$$p(x,y,t) = p(x,y,t)^{1/2}\cdot p(x,y,t)^{1/2}$$ 
and then applying \eqref{lap} to the first term in the product, \eqref{hhh} to the second.
\end{proof}  

\begin{proof}[Proof of Lemma \ref{cacc}] First, the argument in \S9 of \cite{Aro67} allows us to assume that $u$ has a
strong derivative $\partial_{t}u \in L^{2}(Q_{2r}(x_{0},t_{0}))$. Now choose nonnegative cutoff functions
$\chi(x) \in C_{0}^{\infty}(B(x_{0},r))$ and $\eta(s) \in C^{\infty}(\mathbb{R})$, bounded above by
1 and satisfying
\begin{align*}
	\bullet \quad & \chi(x) \equiv 1 \text{ on } B(x_{0},\sigma r),\
			|\nabla\chi(x)| \leq \frac{C}{(1-\sigma)r}; \\
	\bullet \quad &  \eta(s) \equiv 0 \text{ for } s \leq t_{0}-r^{2},\
			\eta(s) \equiv 1 \text{ for } s \geq t_{0} - (\sigma r)^{2},\
			|\eta'(s)| \leq \frac{C}{(1-\sigma)r^{2}}	.	
\end{align*}
Fix $t \in [t_{0}-(\sigma r)^{2}, t_{0}].$ Note that the test function $\eta^{2}(s)\chi^{2}(x)u(x,s)$ belongs
to the class $\mathcal{C}$ specified in Definition~\ref{weakly}. Hence we may use
this function for $\phi(x,s)$ in \eqref{ws}. This yields, since $\eta(t) = 1$,
\begin{multline}\label{subs}
   \int_{B(x_{0},r)}u^2\chi^2\,dx - 
   \int_{t_{0}-r^2}^{t}\int_{B(x_{0},r)}(u^2(2\eta\,\eta')\chi^2 + (u\,\partial_{s}u)\eta^2\chi^2)\,dx\,ds \\
   + \int_{t_{0}-r^2}^{t}\int_{B(x_{0},r)} ((\nabla u\cdot\nabla\chi^2)\eta^2u + 
   |\nabla u|^2\eta^2\chi^2 + Vu^2\eta^2\chi^2)\,dx\,ds = 0.
\end{multline}
Note that the second integral in \eqref{subs} may be written as
$$ 
 \int_{t_{0}-r^2}^{t}\int_{B(x_{0},r)}\frac{1}{2}\partial_{s}(u^2\eta^2\chi^2)\,dx\,ds
  + \int_{t_{0}-r^2}^{t}\int_{B(x_{0},r)}(u^2(\eta\,\eta')\chi^2\,dx\,ds.
 $$
 And by the bounded convergence theorem we may interchange integration and differentiation in the 
first term above, so because $\eta(t_{0} - r^2) = 0$ it follows
 $$  \int_{t_{0}-r^2}^{t}\int_{B(x_{0},r)}\frac{1}{2}\partial_{s}(u^2\eta^2\chi^2)\,dx\,ds
  = \frac{1}{2}\int_{B(x_{0},r)}u^2\chi^2\,dx.$$
Substituting these observations into \eqref{subs}, we obtain
\begin{multline}\label{ws:int}
	\frac{1}{2}\int_{B(x_{0},r)} u^{2}\chi^{2}\,dx +
		\int_{t_{0}-r^2}^{t}\int_{B(x_{0},r)} \bigl(|\nabla u|^{2} \eta^2 \chi^2 + V u^2\eta^2\chi^2\bigr)\,dx\,ds \\
	= \int_{t_{0}-r^2}^{t}\int_{B(x_{0},r)} \bigl(u^2\chi^2\eta\,\eta' -
		(\nabla u \cdot \nabla\chi^2)\, \eta^2 u\bigr)\,dx\,ds
\end{multline}
Since $t$ was arbitrary and $V \geq 0$, we next conclude
$$
	\sup_{t_{0}-(\sigma r)^{2} \leq t \leq t_{0}} \frac{1}{2}
		\int_{B(x_{0},r)} u^2\chi^{2}\,dx \leq
	\iint_{Q_{r}(x_{0}, t_{0})} \bigl(u^2|\eta'| +
	|\nabla u|\chi\,\eta^2\,|u| |\nabla \chi|\bigr)\,dx\,ds.
$$	
In the second term of the righthand integral we apply Cauchy's inequality \linebreak
$ab \leq \frac{a^2}{2} + \frac{b^2}{2}$,
using $a = |\nabla u|\chi$, $b = |u| |\nabla \chi|$. It follows
\begin{multline*}
	\sup_{t_{0}-(\sigma r)^{2} \leq t \leq t_{0}} \frac{1}{2}
		\int_{B(x_{0},r)} u^2\chi^{2}\,dx \leq
	\iint_{Q_{r}(x_{0}, t_{0})} u^2|\eta'|\,dx\,ds\ + \\
	\qquad \frac{1}{2}\iint_{Q_{r}(x_{0}, t_{0})} \chi^2\eta^2|\nabla u|^2\,dx\,ds
	 + \frac{1}{2}\iint_{Q_{r}(x_{0}, t_{0})}|u|^2|\nabla\chi|^2\eta^2\,dx\,ds
\end{multline*}
And from the bounds on $|\nabla \chi|$, $\eta$, and $|\eta'|$,
\begin{multline}\label{two}
	\sup_{t_{0}-(\sigma r)^{2} \leq t \leq t_{0}}
		\int_{B(x_{0},r)} u^2\chi^{2}\,dx \\ \leq 
	\frac{C}{(1-\sigma)^2r^2}\iint_{Q_{r}(x_{0}, t_{0})}u^2\,dx\,ds\ + 
 \iint_{Q_{r}(x_{0}, t_{0})}\chi^2\eta^2|\nabla u|^2\,dx\,ds
\end{multline}

To complete the proof we make another application of \eqref{ws:int},
this time with $t = t_{0}$. The positivity of the leftmost integral and the bounds on
$\chi$, $\eta$, and $|\eta'|$ yield
\begin{multline*}
	\iint_{Q_{r}(x_{0},t_{0})}\biggl(|\nabla u|^{2}\chi^2\,\eta^2 +
		Vu^2\chi^2\eta^2\biggr)\,dx\,ds \\
		\leq \frac{C}{(1-\sigma)r^2}\iint_{Q_{r}(x_{0},t_{0})}
		u^2 \,dx\,ds +
		\iint_{Q_{r}(x_{0},t_{0})} |\nabla u|\chi\,\eta^2\,|u| |\nabla \chi|\,dx\,ds
\end{multline*}
And the above use of Cauchy's inequality gives, after
rearranging, absorbing terms, and possibly increasing $C$,
\begin{multline}\label{three}
	\iint_{Q_{r}(x_{0}, t_{0})}\chi^2\eta^2|\nabla u|^2\,dx\,ds
	 + \iint_{Q_{r}(x_{0},t_{0})}Vu^2\chi^2\eta^2\,dx\,ds \\
	 \leq \frac{C}{(1-\sigma)^2r^2}\iint_{Q_{r}(x_{0}, t_{0})}u^2\,dx\,ds
\end{multline}
Combining \eqref{two} and \eqref{three} and restricting the lefthand
integrals to where the cutoff functions are unity yields the lemma. 
Note that \eqref{three} is all that is needed for the proof
of Theorem~\eqref{bus}.
\end{proof}

\begin{proof}[Proof of Theorem \ref{explicit}] 
It suffices to treat the case $a_{0} = 0$, because
\begin{equation}\label{aaa}
\left[\partial_{t} - \Delta + (a_{2}x^2 + a_{1}x + a_{0})\right]u =
 	e^{-a_{0}t} \left[\partial_{t} - \Delta + (a_{2}x^2 + a_{1}x)\right]e^{a_{0}t}u.
\end{equation}
So if $p_{0}(x,y,t)$ is the heat kernel for the operator with potential $V(x) = \sum_{i=1}^2 a_{i}x^{i}$,
then $e^{-a_{0}t}p_{0}(x,y,t)$ is directly checked to be the heat kernel when $V(x) = \sum_{i=0}^2 a_{i}x^{i}$.

Several approaches to the calculation are possible. Interpreting
$p(x,y,t)$ as the transition probability of a system from state $x$ to state $y$ in time $t$, the 
kernel is determined by a certain path integral of the Lagrangian given by $(-\Delta + V)$.
For quadratic $V$ this path integral can then be computed using the van Vleck determinant (see
\cite{Vis93} for details.) Another possibility is to begin with the Mehler kernel of the harmonic
oscillator (see again \cite{Dav89}) and study the behavior of this kernel under appropriate 
scalings and translations of the harmonic oscillator. Our method, requiring rather less theory 
than either of the above, is to take from \cite{Bea99} the \emph{ansatz}
$$p(x,y,t) = \phi(t)\exp\left\{-\frac{1}{2}\left(\alpha(t)x^2 + \gamma(t)y^2 + 2\beta(t)xy\right)\right\}
	\exp\left\{-\mu(t)x - \nu(t)y\right\}.$$
We then simply attempt to enforce on this ansatz the two conditions
\begingroup
\begin{equation}\label{hkeq}
	\left\{\begin{array}{ll}
			(\partial_{t} + H)p(\cdot,y,t) = 0 & \textrm{ on } \mathbb{R}\times(0,\infty)  \\
			\displaystyle \lim_{t \to 0^{+}}p(x,y,t) = \delta(x - y) & \textrm{ in } L^1(\mathbb{R})
		\end{array}\right.
\end{equation}
\endgroup
The differential condition in (\ref{hkeq}) yields a system of six ODE's in $t$.
\begin{align}
	\label{eq1} \alpha' &= -2\alpha^2 + 2a_{2} \\
	\label{eq2} \beta' &= -2\alpha \beta \\
	\label{eq3} \gamma' &= -2\beta^2 \\
	\label{eq4 }\mu' &= a_{1} - 2\mu\alpha \\
	\label{eq5} \nu' &= -2\mu\beta \\
	\label{eq6} \phi'/\phi &= -\alpha + \mu^2.
\end{align}
These equations are solvable, in order, by elementary methods. The
constants of integration must be chosen according to the second condition
in (\ref{hkeq}). For $\beta(t)$ this is actually easy to do, but in solving the
remaining equations we set constants of integration to zero and identify 
their true values only after the functional form of $p(x,y,t)$ is known. Also, the singularity condition of \eqref{hkeq} is equivalent to two properties. First,
$$\displaystyle \lim_{t \to 0^+}p(x,y,t) = 0$$ 
whenever $x \neq y$; and, second,
$$ \displaystyle \lim_{t \to 0^+} \int_{\mathbb{R}} p(x,y,t)\,dy = 1 $$
for any $x \in \mathbb{R}$.

The result follows from elementary, though tedious, computations.
\end{proof}

\section{A Lower Bound for $p$ when $V \in RH_\infty$}\label{result2}

For $V$ which belongs to a local Kato class and decays or is $L^\infty$ bounded
at infinity, Zhang and Zhao \cite{ZhZh00} proved the attractive lower bound
\[
	p(x,y,t) \geq \left\{\begin{array}{lr}
		\frac{c_{1}}{t^{n/2}}e^{-c_{2}K_{V^{+}}(t)}
			& \quad |x-y|^{2} \leq t \\
		\frac{c_{1}}{t^{n/2}}e^{-c_{2}\frac{|x-y|^{2}}{t}[1 + K_{V^{+}}(\frac{t^{2}}{|x-y|^{2}})]}
			& \quad |x-y|^{2} \geq t
	\end{array}\right.
\]
where 
$$K_{V}(t) = \sup_{x}\int_{0}^{t}\int_{\mathbb{R}^{n}}\frac{1}{(t-s)^{n/2}}e^{-c\frac{|x-y|^{2}}{t-s}}
|V(y)|\,dy\,ds.$$
But to our knowledge, the case with $V$ unbounded near infinity has 
not received any attention in the literature. So here we provide a lower bound 
for $p$ whose dependence on $V$ is analogous to the $V$-dependent decay of Theorem~\ref{bus}. 
However, it applies only for $V \in RH_\infty$, the most tractable reverse \HOL\ class.

\begin{thm}\label{lbounds} If $V \in RH_{\infty}$, the heat kernel of the Schr\"{o}dinger
operator $H = -\Delta + V$ on $L^{2}(\mathbb{R}^{n})$ satisfies with $0 < \kappa < 1$
fixed
\begin{equation}\label{convenient2}
	p(x,y,t) \geq \left\{\begin{array}{lr} \displaystyle
		\frac{c_{0}}{t^{n/2}} \exp\{-c_{1}t\,\av_{Z_{\sqrt{t}}(x)}V\}
			& \quad |x-y| < \kappa\sqrt{t} \vspace{.1in} \\ \displaystyle 
			\frac{c_{0}}{t^{n/2}}e^{-c_{3}\frac{|x-y|^{2}}{t}}\exp\bigl\{-c_{1}t
		(c_{2}^{\frac{|x-y|^{2}}{t}}\av_{Z_{t/|x-y|}(x)}V)\bigr\}
			& \quad |x-y| \geq \kappa\sqrt{t}
	\end{array}\right.
\end{equation}
for some constants $c_{i} > 0$ for $i =0,1,2,3$.
\end{thm}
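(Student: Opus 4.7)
The plan is to use the Feynman--Kac representation of $p$. Since $V \in RH_\infty \subset L^\infty_{\loc}(\R^n)$ is nonnegative, one has
\[
p(x,y,t) = p_0(x,y,t)\, E^{x,y;t}\Bigl[\exp\Bigl(-\int_0^t V(\omega_s)\,ds\Bigr)\Bigr],
\]
where $p_0(x,y,t) = (4\pi t)^{-n/2}e^{-|x-y|^2/(4t)}$ is the free heat kernel and the expectation is over the Brownian bridge $\omega$ from $x$ to $y$ in time $t$. (For $V$ merely in $A_\infty$ this is justified by monotone approximation with $V_M = \min(V,M)$ and monotone convergence of the associated semigroups.) The strategy is then to restrict the expectation to bridges confined to a well-chosen region, bound $V$ there by a local average using $V \in RH_\infty$, and estimate the confinement probability from below.

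For the on-diagonal case $|x-y| < \kappa\sqrt{t}$ (with $\kappa < 1$ fixed), I would restrict to bridges that stay inside $Z_{C\sqrt{t}}(x)$ for a sufficiently large constant $C$. Standard Brownian bridge scaling makes the confinement probability bounded below by a positive constant independent of $(x,y,t)$. Because $V \in RH_\infty$ and $A_\infty$ weights are doubling, $\esssup_{Z_{C\sqrt{t}}(x)} V \lesssim \av_{Z_{C\sqrt{t}}(x)} V \lesssim \av_{Z_{\sqrt{t}}(x)} V$, which gives $\int_0^t V(\omega_s)\,ds \leq C_1 t\,\av_{Z_{\sqrt{t}}(x)} V$ on the confined paths. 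Combined with $p_0(x,y,t) \geq c\,t^{-n/2}$, this yields the first line of \eqref{convenient2}.

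For the off-diagonal case $|x-y| \geq \kappa\sqrt{t}$, set $r = t/|x-y|$ and build a chain of $N \approx |x-y|^2/t$ cubes $Z_r(z_0),\dots,Z_r(z_N)$, with $z_0=x$, $z_N=y$, and consecutive centers at distance $\approx r$, whose union is a tube of width $r$ joining $x$ to $y$. Restrict the expectation to bridges confined to this tube. A Brownian bridge small-ball estimate, applied on each of $N$ subintervals of length $t/N$, lower-bounds the confinement probability by $\exp(-c_3|x-y|^2/t)$, which together with $p_0$ produces the full Gaussian factor. On the other hand, doubling of $V$ gives $\av_{Z_r(z_{i+1})}V \leq C_0\,\av_{Z_r(z_i)}V$ for adjacent cubes, hence iteratively $\av_{Z_r(z_i)}V \leq C_0^i\,\av_{Z_r(x)}V$; combining this with $V \in RH_\infty$ applied on each cube produces $\esssup_{\mathrm{tube}} V \leq C_0^{N+1}\av_{Z_r(x)}V = c_2^{|x-y|^2/t}\av_{Z_r(x)}V$. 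Therefore $\int_0^t V(\omega_s)\,ds \leq C_1 t\, c_2^{|x-y|^2/t}\av_{Z_{t/|x-y|}(x)}V$ on confined paths, which yields the second line of \eqref{convenient2}.

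The main obstacle is the off-diagonal confinement estimate: when $|x-y|/\sqrt{t}$ is large, the tube width $r = t/|x-y|$ is much smaller than the typical bridge deviation $\sqrt{t}$, so the probability must be obtained by partitioning $[0,t]$ into $N$ subintervals and applying a small-ball estimate on each, with careful bookkeeping to keep the cost $O(1)$ per link for a total of $O(|x-y|^2/t)$. A secondary subtlety is the precise formulation of Feynman--Kac for $V \in A_\infty$ (which may blow up on small sets), which is addressed by the monotone approximation noted above.
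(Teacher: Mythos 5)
Your proposal is correct in outline but takes a genuinely different route from the paper, substituting probabilistic machinery for analytic machinery at essentially every step. Where you invoke the Feynman--Kac formula and bound the Brownian-bridge exponential functional by confining the bridge to a region where $V$ is controlled, the paper instead uses a parabolic maximum principle twice: once to show $p \geq p_B$, the Dirichlet heat kernel of $H$ on a ball $B = B_{\sqrt t}(x)$, and once to show $p_B \geq e^{-Mt}\Gamma_D$, where $M = C\av_{Z_{2\sqrt t}(x)}V$ bounds $V$ on $B$ (via $RH_\infty$) and $\Gamma_D$ is the Dirichlet Laplacian's heat kernel on $B$. The probabilistic content you supply as a ``bridge confinement probability'' the paper imports as van den Berg's lower bounds for $\Gamma_D$ (Theorem~\ref{berg} and Proposition~\ref{eigen}); these are the same fact, since $\Gamma_D(x,y,t) = p_0(x,y,t)\,P^{x,y;t}[\omega \subset D]$. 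For the off-diagonal regime, your tube-confinement-with-small-ball-estimates is replaced in the paper by Chapman--Kolmogorov iteration (Lemma~\ref{semi}): $p(x,y,t)$ is written as an $(M-1)$-fold integral of $M$ short-time kernels $p(z_i, z_{i+1}, t/M)$ with $M \approx |x-y|^2/t$, the domain of integration is restricted to a chain of cubes of side $\approx t/|x-y|$ along the segment from $x$ to $y$, and the on-diagonal bound is applied to each factor; the per-link constant $(\sigma c_0)^M$ then produces $\exp(-c_3|x-y|^2/t)$. Both arguments use Christ's doubling lemma (Lemma~\ref{doubling}) in the same way to propagate the average of $V$ along the chain, producing the $c_2^{|x-y|^2/t}$ factor. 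The net trade-off: your route is more transparent conceptually but outsources a nontrivial Brownian-bridge small-ball/tube estimate, whereas the paper's route is purely analytic, quotes a clean off-the-shelf theorem of van den Berg, and avoids stochastic analysis entirely, making it more self-contained for a PDE readership. One small point worth flagging in your write-up: to iterate $\av_{Z_r(z_{i+1})}V \leq C_0\,\av_{Z_r(z_i)}V$ you should note that $Z_r(z_{i+1})$ is contained in a fixed dilate of $Z_r(z_i)$ (since adjacent centers are distance $\approx r$ apart), and then apply doubling of $V\,dx$; this is exactly how the paper handles it.
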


The idea of the proof is to establish a bridge between $p$ and the heat kernel of an appropriate
Dirichlet Laplacian, where van den Berg's results in \cite{Ber90} can be 
applied. We will do this in Section~\ref{proof2} using the semigroup property of $p(x,y,t)$, a
parabolic maximum principle, and a lemma that relates the averages of a doubling measure over
nested cubes. Let us first review our technical devices and van den Berg's estimates.

\subsection{The semigroup property}

\begin{lmm}\label{semi} Let $p(x,y,t)$ be the heat kernel of a \SCH\ operator $H$ on
$L^2(\mathbb{R}^{n})$ with locally integrable nonnegative potential. Then 
\begin{equation*}
	p(x,y,t+s) = \int_{\mathbb{R}^{n}} p(x,z,t)\,p(z,y,s)\,dz 
\end{equation*} 
for all $x,y \in \mathbb{R}^{n}$ and $s,t > 0$. \qed
\end{lmm}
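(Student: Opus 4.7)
The semigroup property is really a restatement of the operator identity $e^{-(t+s)H}=e^{-tH}e^{-sH}$ at the level of integral kernels, so my plan is to establish the operator identity first and then transfer it to the kernels by testing against a dense class of functions.

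First I would recall that under the hypothesis $V\in L^1_{\loc}(\R^n)$ with $V\ge 0$, the Dirichlet form construction of Section~\ref{inspire} produces a densely defined, nonnegative, self-adjoint operator $H$ on $L^2(\R^n)$. The spectral theorem then yields a strongly continuous contraction semigroup $\{e^{-tH}\}_{t\ge 0}$ on $L^2(\R^n)$ satisfying the operator semigroup identity $e^{-(t+s)H}=e^{-tH}e^{-sH}$ for all $s,t\ge 0$. By the domination $0\le e^{-tH}\le e^{t\Delta}$ (Trotter/Kato, or the trivial Gaussian bound already quoted in Section~\ref{inspire}), $e^{-tH}$ extends to a bounded operator from $L^1$ to $L^\infty$, and the integral kernel $p(x,y,t)$ exists and is pointwise bounded by the classical heat kernel.

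Next, fix $f\in L^1(\R^n)\cap L^2(\R^n)$ and $s,t>0$. On the one hand,
\begin{equation*}
(e^{-(t+s)H}f)(x)=\int_{\R^n}p(x,y,t+s)f(y)\,dy.
\end{equation*}
On the other hand, writing $g=e^{-sH}f$ (which lies in $L^2$, and in fact in $L^1\cap L^\infty$ by the Gaussian bound), we have
\begin{equation*}
(e^{-tH}g)(x)=\int_{\R^n}p(x,z,t)\,g(z)\,dz=\int_{\R^n}p(x,z,t)\int_{\R^n}p(z,y,s)f(y)\,dy\,dz.
\end{equation*}
The Gaussian pointwise bound on $p$ makes the double integrand absolutely integrable, so Fubini's theorem gives
\begin{equation*}
(e^{-tH}e^{-sH}f)(x)=\int_{\R^n}\left(\int_{\R^n}p(x,z,t)\,p(z,y,s)\,dz\right)f(y)\,dy.
\end{equation*}

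Equating the two representations, the operator identity $e^{-(t+s)H}=e^{-tH}e^{-sH}$ forces
\begin{equation*}
\int_{\R^n}\Big[p(x,y,t+s)-\int_{\R^n}p(x,z,t)p(z,y,s)\,dz\Big]f(y)\,dy=0
\end{equation*}
for every $f\in L^1\cap L^2$, so the bracketed quantity vanishes for almost every $(x,y)$. The main technical point is upgrading this almost-everywhere identity to the pointwise identity stated in Lemma~\ref{semi}; this is handled by appealing to the continuity of $p(\cdot,\cdot,r)$ for $r>0$, which follows from the local boundedness result of Theorem~\ref{wow} applied on both space variables (using symmetry $p(x,y,r)=p(y,x,r)$ coming from self-adjointness of $H$). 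The only real obstacle here is the Fubini justification and the continuity upgrade; neither is deep, but both rely on the trivial Gaussian dominating bound established at the outset of Section~\ref{inspire}, which is what makes the argument go through for every $V\in L^1_{\loc}$ with $V\ge 0$.
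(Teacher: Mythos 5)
The paper itself supplies no proof of Lemma~\ref{semi}---note the \verb|\qed| embedded in the statement---and the surrounding text says explicitly that it is just ``the standard fact that $e^{-Ht}$ is a semigroup, but in the precise form we will need it.'' So there is no paper argument to compare against; your job was to fill in a routine verification, and the structure you give (self-adjointness of $H$ via its Dirichlet form, the spectral-theorem semigroup identity $e^{-(t+s)H}=e^{-tH}e^{-sH}$, Gaussian domination to justify Fubini, density of $L^1\cap L^2$ to get an a.e.\ identity, then a continuity upgrade) is the standard and correct one.

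One citation in your final step is off, and it is worth being precise because you flag it yourself as the ``main technical point.'' Theorem~\ref{wow} is Moser's \emph{local boundedness} estimate: it bounds $\sup_Q|u|$ by the $L^2$ norm over a larger cylinder, but by itself it does not yield \emph{continuity} of $u$. Upgrading the almost-everywhere identity to the claimed pointwise one requires joint continuity of $(x,y)\mapsto p(x,y,r)$ for fixed $r>0$, and for that you need a parabolic \HOL\ continuity (De~Giorgi--Nash--Moser) or Harnack estimate for weak solutions/subsolutions of $(\partial_t+H)u=0$, not merely the sup-bound of Theorem~\ref{wow}. This is a genuine and well-known fact---nonnegativity of $V$ again lets you treat $p$ as a subsolution of the free heat equation and borrow the classical Nash--Moser regularity---but as written your appeal to Theorem~\ref{wow} does not close the gap. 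With that reference corrected (cite the \HOL\ estimate rather than the boundedness estimate), the argument is complete.
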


This restates the standard fact that $e^{-Ht}$ is a semigroup, but in the precise form we
will need it. For an off-diagonal 
estimate of $p(x,y,t)$, we will invoke Lemma~\ref{semi} repeatedly to write $p(x,y,t)$ as an
iterated integral of many ``copies'' of itself at earlier times. Our on-diagonal bounds 
will then apply to these copies when they are restricted to appropriately small regions in space.

\subsection{A maximum principle}

The following maximum principle will connect the heat kernel of $H = -\Delta + V$ to that
of an appropriate Dirichlet Laplacian. Note that we need the local boundedness of $V$ 
implied by membership in $RH_{\infty}$. 

\begin{thm}\label{mymax}
Suppose $V \geq 0$ is bounded in a cylinder $Q = Q_{r}(x_{0},t_{0})$ and $u \in C(\overline{Q})$
is a weak solution of $(\partial_{t} + H)u = 0$ in $Q$. Then 
$$\sup_{Q} u \leq \sup_{\partial Q}u_{+} \quad \textrm{ and } \quad \inf_{Q}u \geq \inf_{\partial Q}u_{-}. $$
If $u$ is only a weak supersolution of the same equation in $Q$, then we still have  the 
second conclusion. \qed
\end{thm}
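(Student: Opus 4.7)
The plan is to deduce both halves of Theorem~\ref{mymax} from a single base claim: any weak supersolution $u$ of $(\partial_t + H)u \geq 0$ in $Q$ with $u \geq 0$ on $\partial Q$ must satisfy $u \geq 0$ throughout $Q$. Granting this, the infimum estimate for supersolutions follows by a translation argument: setting $m = \inf_{\partial Q} u_- \leq 0$, the shifted function $v = u - m$ is still a weak supersolution because $\partial_t v - \Delta v + V v = -V m \geq 0$ (using $V \geq 0$ and $m \leq 0$), and $v \geq 0$ on $\partial Q$ by construction; the base claim gives $v \geq 0$ in $Q$, i.e.\ $u \geq m$. The corresponding supremum estimate for actual solutions then follows by applying the infimum estimate to $-u$, which is also a solution, and observing $(-u)_- = -u_+$.

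For the base claim, I would test the weak supersolution inequality against the non-negative function $w := \max(-u, 0) \geq 0$, which vanishes on $\partial Q$ by hypothesis. On the set $\{u<0\}$ that supports $w$, the chain-rule identities $\nabla u \cdot \nabla w = -|\nabla w|^2$, $u \cdot w = -w^2$, and $(\partial_s u)\, w = -\tfrac{1}{2}\partial_s(w^2)$ reduce the weak inequality, evaluated at any terminal time $\tau \in (t_0 - r^2, t_0]$, to
\[
-\tfrac{1}{2}\int_{B(x_0,r)} w^2(x,\tau)\, dx \;-\; \iint |\nabla w|^2\, dx\, ds \;-\; \iint V\, w^2\, dx\, ds \;\geq\; 0,
\]
where the initial-time contribution drops out because $w \equiv 0$ at $t = t_0 - r^2$. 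Since $V \geq 0$, every term on the left is non-positive, so each vanishes; in particular $w \equiv 0$ a.e.\ in $Q$, and continuity of $u$ promotes this to $u \geq 0$ pointwise on $\overline{Q}$.

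The main obstacle is justifying $w$ as an admissible test function, since a weak solution in the sense of Definition~\ref{weakly} is not known a priori to possess a strong time derivative in $L^2$, and the identity $(\partial_s u)\, w = -\tfrac{1}{2}\partial_s(w^2)$ above is delicate without one. The standard remedy is to argue with the Steklov average $u_h(x,t) = h^{-1}\int_t^{t+h} u(x,s)\, ds$, which enjoys $\partial_t u_h \in L^2$ and converges to $u$ in $L^2(H^1)$ and $L^\infty(L^2)$ as $h \to 0^+$; one runs the energy computation above with $w_h = \max(-u_h, 0)$ in place of $w$ and then passes to the limit. A milder issue is the lateral-boundary trace of $w$, which needs to vanish so that the spatial integration by parts in the weak formulation introduces no boundary contributions; this is again supplied by the hypothesis $u \geq 0$ on $\partial Q$ together with $u \in C(\overline{Q})$.
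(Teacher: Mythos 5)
The paper does not actually prove Theorem~\ref{mymax}: it cites \cite{KlNa06} and remarks that, because $u$ is not a classical solution, the argument ``is rather involved and best accomplished through functional analytic machinery.'' Your proposal instead gives a self-contained energy argument (truncation plus Steklov averaging), which is a genuinely different and more elementary route. The reduction is clean: the translate $v = u - m$ with $m = \inf_{\partial Q} u_- \le 0$ remains a supersolution because $(\partial_t + H)(-m) = -Vm \ge 0$, and the supremum estimate for solutions follows by applying the infimum bound to $-u$ with $(-u)_- = -u_+$; your base claim then does the work. The base computation itself is right: testing against $w = \max(-u,0)$, the Stampacchia-type identities $\nabla u\cdot\nabla w = -|\nabla w|^2$, $uw = -w^2$, and $(\partial_s u)w = -\tfrac12\partial_s(w^2)$ convert the weak inequality into a sum of nonpositive quantities that must all vanish, and continuity of $u$ upgrades $w=0$ a.e.\ to $u\ge 0$ on $\overline Q$.

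Two technical details deserve care if you write this out in full. First, as you note, $w$ is not directly in the test class $\mathcal C$ because $\partial_s u$ need not lie in $L^2$; the Steklov-average regularization $u_h$ is the standard fix, and you must track that $w_h = \max(-u_h,0)\to w$ in the correct norms and that the truncation and mollification commute in the limit. Second, Definition~\ref{weakly} as written does not impose that test functions vanish on the lateral boundary of the cylinder (only at the initial time), which in general is needed for the weak formulation to be meaningful; this is not a problem for your argument because the hypothesis $u\ge 0$ on $\partial Q$ forces $w$ to vanish on the lateral boundary anyway, so no spurious boundary contribution appears. With those points addressed, your proof is correct and arguably preferable to the cited black-box result, since it isolates exactly where the hypotheses $V\ge 0$, $u\in C(\overline Q)$, and the boundary sign condition enter.
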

Since we cannot assume $u$ is a classical solution in $Q$, the proof
is rather involved and best accomplished through functional analytic machinery.  
Details may be found in \cite{KlNa06}.

\subsection{A lemma for doubling measures}

The following useful lemma comes from Christ \cite{Chr91}, and allows us to compare
the averages of $V$ over two cubes whose centers are some distance from each other.

\begin{lmm}\label{doubling}
For any doubling measure $\omega$ on $\mathbb{R}^{n}$, there exist positive $C < \infty$
and $\epsilon < 1$ such that for any cubes $Z' \subset Z$
$$\int_{Z'}\,d\omega \leq C\left(\frac{|Z'|}{|Z|}\right)^{\epsilon}\int_{Z}\,d\omega$$
where e.g., $|Z|$ denotes the Euclidean measure of $Z$. \qed
\end{lmm}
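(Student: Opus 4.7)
The plan is to derive this inequality from the doubling condition via a dyadic iteration that produces geometric decay of $\omega$-mass through nested subcubes, which then converts into the desired polynomial rate in the volume ratio.

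First I would prove a one-step spreading estimate: if $Q$ is any cube and $Q_{1},\dots,Q_{2^{n}}$ are the $2^{n}$ equal dyadic subcubes of $Q$, then each individual $Q_{i}$ carries at most a fixed fraction $\theta < 1$ of $\omega(Q)$. The lower bound on each child comes from doubling: the concentric cube $4Q_{i}$ has side length $2\,\ell(Q)$ and therefore contains $Q$, so two applications of the doubling condition give $\omega(Q) \leq \omega(4Q_{i}) \leq D^{2}\,\omega(Q_{i})$, i.e.\ $\omega(Q_{i}) \geq D^{-2}\omega(Q)$, where $D$ is the doubling constant. Summing this lower bound over the other $2^{n}-1$ children yields the matching upper bound $\omega(Q_{i}) \leq \theta\,\omega(Q)$ with $\theta = 1 - (2^{n}-1)D^{-2}$. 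If the doubling constant is so large that this $\theta$ is not strictly less than $1$ with a useful margin, or the measure so spread out that $\theta$ is not nonnegative, I would instead run the same argument on a $k_{0}$-step dyadic refinement of $Q$ for some $k_{0}$ chosen large enough to force $\theta_{0} \in [0,1)$.

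Iterating the spreading estimate $k$ times shows that any level-$k$ dyadic subcube $Q$ of $Z$ satisfies $\omega(Q) \leq \theta^{k}\,\omega(Z)$. Given an arbitrary $Z' \subset Z$, I would choose $k$ so that $\ell(Z)/2^{k} < \ell(Z') \leq \ell(Z)/2^{k-1}$, which ensures that $Z'$ is covered by at most $3^{n}$ level-$k$ dyadic subcubes of $Z$ (comparing side lengths coordinatewise). Summing the previous bound over this covering gives $\omega(Z') \leq 3^{n}\theta^{k}\,\omega(Z)$. The choice of $k$ forces $2^{k} \geq (|Z|/|Z'|)^{1/n}$, so $\theta^{k} \leq (|Z'|/|Z|)^{\epsilon}$ with $\epsilon = \log_{2}(1/\theta)/n > 0$, yielding the claimed inequality with $C = 3^{n}$; if the resulting $\epsilon$ exceeds $1$, simply shrinking $\epsilon$ into $(0,1)$ weakens the estimate but keeps it valid since $|Z'|/|Z| \leq 1$.

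The main obstacle is producing the strict inequality $\theta < 1$ uniformly in the location of the child inside its parent and independently of the ambient cube $Q$. The two-step doubling bound $\omega(Q_{i}) \geq D^{-2}\omega(Q)$ achieves exactly this position-independent lower bound, which is the entire mechanism by which the argument bootstraps the doubling condition into a quantitative rate of decay. Once that step is established, the rest is a direct iteration that relies only on the geometry of dyadic decompositions and a standard covering count, both of which are position-independent.
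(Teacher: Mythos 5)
Your argument is correct and is essentially the standard proof of this fact; the paper itself offers no proof, simply citing Christ, so there is nothing to compare against in the text. A few small points worth noting.

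The one-step spreading estimate is sound: for a dyadic child $Q_{i}$ of $Q$, the cube $4Q_{i}$ (same center as $Q_{i}$, four times the side length, hence side length $2\ell(Q)$) does contain $Q$ regardless of which child $Q_{i}$ is, so two applications of doubling give $\omega(Q_{i}) \geq D^{-2}\omega(Q)$, where $D$ is the (cube) doubling constant. Summing over the siblings and the covering count of $3^{n}$ at the right dyadic scale then closes the argument exactly as you describe, with $\epsilon = \tfrac{1}{n}\log_{2}(1/\theta)$.

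Two minor remarks. First, your hedge about needing a deeper $k_{0}$-step refinement to force $\theta \in [0,1)$ is unnecessary: summing the lower bound $\omega(Q_{j}) \geq D^{-2}\omega(Q)$ over \emph{all} $2^{n}$ children gives $1 \geq 2^{n}D^{-2}$, i.e.\ $D \geq 2^{n/2}$, hence $\theta = 1 - (2^{n}-1)D^{-2} \geq 2^{-n} > 0$ automatically, and $\theta < 1$ is immediate; the same computation also shows $\epsilon \leq 1$, so only the boundary case $\epsilon = 1$ needs the shrinking trick you mention. Second, the doubling hypothesis is usually stated for balls rather than cubes; passing to a cube-doubling constant costs a fixed power of $D$ depending only on $n$, which you implicitly absorb. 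Neither point affects the validity of the proof.
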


\subsection{Estimates on the Dirichlet heat kernel}
Now we come to van den Berg's results on Dirichlet heat kernels. Their statement differs somewhat for 
the $n=1$ and $n\geq2$ cases. In the latter case we need the following definition. 

\begin{defn}\label{crazy}
Fix an open set $D \subset \mathbb{R}^n$, where $n \geq 2$. Given $\epsilon > 0$, 
let $D_{\epsilon}$ be the points in $D$ at least distance $\epsilon$ from the boundary; 
and let $d_{\epsilon}(x,y)$ for $x,y \in D$ be the infimum of lengths of arcs in $D_{\epsilon}$ 
with endpoints $x$ and $y$. When $d_{\epsilon}(x,y) < \infty$, let $\gamma_{\epsilon}$
be a minimal geodesic from $x$ to $y$ and define
$$\alpha(\gamma_{\epsilon}) = \int_{s \colon \gamma_{\epsilon}(s) \in D_{\epsilon}}
	\biggl|\frac{d^{2}\gamma_{\epsilon}(s)}{ds^2}\biggr|\,ds.$$
\end{defn}

\begin{thm}[van den Berg]\label{berg} Suppose $D$ is an open set in $\mathbb{R}^{n}$ 
with $n \geq 2$. Given $\epsilon > 0$, $0 < \delta \leq \epsilon$, $x \in D$, $y \in D$ such 
$d_{\epsilon}(x,y) < \infty$, it holds for all $t > 0$
$$\Gamma_{D}(x,y,t) \geq \frac{C}{t^{n/2}} e^{-\frac{\pi^2 n^2 t}{4\epsilon^2}} 
	\exp\left\{-\frac{d_{\epsilon}(x,y)^2\bigl(1 + 2\delta\alpha(\gamma_{\epsilon})
	d_{\epsilon}(x,y)\bigr)}{4t}\right\} $$
where $\Gamma_{D}$ is the heat kernel of $-\Delta$ on $D$ with Dirichlet boundary 
conditions and $C < 1$ is a positive constant depending only on $n$.  \qed
\end{thm}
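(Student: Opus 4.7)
The plan is to exploit domain monotonicity for the Dirichlet Laplacian and reduce the problem to a heat-kernel bound on a thin tube around the minimal geodesic $\gamma_\epsilon$. Since any open $T \subset D$ satisfies $\Gamma_D \geq \Gamma_T$, I would construct a tube $T_\delta$ of half-width $\delta$ around $\gamma_\epsilon$, chosen small enough that $T_\delta \subset D_{\epsilon - \delta} \subset D$, and then bound $\Gamma_{T_\delta}(x,y,t)$ from below. This reduces the qualitative problem to a concrete geometry: a long, thin, possibly curved cylinder with Dirichlet boundary conditions.

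On a straight tube the kernel separates cleanly. Parametrizing $T_\delta$ by arclength along $\gamma_\epsilon$ together with an $(n-1)$-dimensional transverse coordinate, one writes the heat kernel as a product of the one-dimensional longitudinal kernel, producing the expected Gaussian $e^{-d_\epsilon(x,y)^2/(4t)}$, and the transverse Dirichlet kernel on a ball of radius $\delta$ in $\R^{n-1}$, whose long-time behavior is governed by its first Dirichlet eigenvalue and contributes a factor bounded below by $e^{-\pi^2 n^2 t/(4\epsilon^2)}$ (using $\delta \leq \epsilon$). Combining these pieces via the semigroup property and Fubini yields a preliminary lower bound on the straight analogue of the tube, already exhibiting the two principal exponential factors appearing in the theorem.

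The real tube is curved, and this is what forces the correction factor $1 + 2\delta\alpha(\gamma_\epsilon)d_\epsilon(x,y)$ into the exponent. Following van den Berg, I would pass to Fermi coordinates along $\gamma_\epsilon$ and track the discrepancy between arclength on the geodesic and straight-line distance in ambient space. The integrated curvature $\alpha(\gamma_\epsilon)$ controls precisely how rapidly these two disagree, and a first-order expansion of the Jacobian of the Fermi change of variables produces exactly the stated multiplicative correction in the exponent. A Cameron-Martin / Feynman-Kac argument (reweighting Brownian motion along a straight reference tube by the density induced by the curving) then transfers the straight-tube lower bound to the curved tube while absorbing the coordinate distortion into this single factor. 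The main obstacle is managing the pointwise Jacobian distortion uniformly across the tube and checking that all the compounded error terms consolidate neatly into the one exponential $\exp\{-d_\epsilon(x,y)^2(1 + 2\delta\alpha(\gamma_\epsilon)d_\epsilon(x,y))/(4t)\}$, rather than producing a less useful product of several exponentials; this is where the smallness condition $\delta \leq \epsilon$ and the linear-in-$\alpha$ dependence become essential.
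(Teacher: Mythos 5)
The paper does not prove this theorem; it is stated with a terminal \verb|\qed| and attributed to van den Berg~\cite{Ber90}, i.e.\ it is quoted as a known result and used as a black box in the proof of Theorem~\ref{lbounds}. So there is no in-paper proof for your sketch to be compared against.

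Evaluating the sketch on its own merits: the broad strategy (domain monotonicity to restrict $\Gamma_D$ to a tube around $\gamma_\epsilon$, then separate longitudinal and transverse behavior) is the right starting point, but your treatment of the tube width contains a direction-of-inequality error. You take the tube to have radius $\delta \leq \epsilon$ and assert the transverse Dirichlet factor is ``bounded below by $e^{-\pi^2 n^2 t/(4\epsilon^2)}$ (using $\delta \leq \epsilon$).'' The first Dirichlet eigenvalue of a cross-section of radius $\delta$ scales like $1/\delta^2$; shrinking $\delta$ below $\epsilon$ makes the eigenvalue \emph{larger} and the transverse factor \emph{smaller}, so $\delta \leq \epsilon$ gives a weaker lower bound than the one you need, not a stronger one. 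To produce the stated long-time cost $e^{-\pi^2 n^2 t/(4\epsilon^2)}$ the tube radius must be comparable to $\epsilon$ (which is available precisely because $\gamma_\epsilon$ lies in $D_\epsilon$), while $\delta$ should enter only through the curvature correction $2\delta\,\alpha(\gamma_\epsilon)\,d_\epsilon(x,y)$. You have conflated the two roles. Separately, the first Dirichlet eigenvalue of a round ball of radius $\epsilon$ in $\mathbb{R}^{n-1}$ is $j_{(n-3)/2,1}^2/\epsilon^2$, not $\pi^2 n^2/(4\epsilon^2)$; the specific constant in the theorem comes from van den Berg's particular choice of cross-section, which your sketch does not pin down. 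It is also worth noting that in this paper the theorem is only ever applied with $D$ a ball, where $\gamma_\epsilon$ is a straight segment and $\alpha(\gamma_\epsilon) = 0$; the Fermi-coordinate and Cameron--Martin machinery you invoke for the curved case is therefore never exercised downstream.
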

When using Theorem~\ref{berg} we will always choose $D$ to be a ball, so that $D_{\epsilon}$ is also a ball and
$$\gamma_{\epsilon}(s) = x + \frac{s}{|y - x|}(y-x).$$
Hence $\alpha(\gamma_{\epsilon})$ will always vanish in our applications.

When $n=1$, van den Berg obtained a lower bound on $\Gamma_{D}$ from
ingenious use of a special function identity and the eigenfunction expansion of the 
Dirichlet heat kernel on an interval. Namely, 

\begin{prop}[van den Berg]\label{eigen} Suppose $D \subset \mathbb{R}$ is an interval, and for 
some $x < y$ and $\epsilon > 0$ we have $(x - \epsilon, y + \epsilon) \subset D$. Then
for all $t > 0$
$$\Gamma_{D}(x,y,t) \geq \frac{C}{t^{1/2}}e^{-\frac{|x-y|^{2}}{4t}}(1 - 2e^{-\frac{\epsilon^2}{t}})$$
where $\Gamma_{D}$ is the heat kernel of $-\Delta$ on $D$ with Dirichlet boundary 
conditions and $C < 1$ is a positive constant depending only on $n$. 
\end{prop}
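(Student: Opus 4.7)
The plan is to reduce to the smallest relevant interval, write an explicit formula for the Dirichlet heat kernel there, and then isolate the leading Gaussian from its reflected images. First I would use domain monotonicity of Dirichlet heat kernels (a consequence of the probabilistic representation in terms of killed Brownian motion, or equivalently the maximum principle applied to $\Gamma_D - \Gamma_{D^*}$): since $D^* := (x-\epsilon, y+\epsilon) \subset D$, we have $\Gamma_D(x,y,t) \geq \Gamma_{D^*}(x,y,t)$, so it suffices to prove the bound with $D$ replaced by the minimal interval $D^*$, which has length $L = (y-x) + 2\epsilon$.

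Next I would write $\Gamma_{D^*}$ explicitly. One either starts from the eigenfunction expansion
\[
\Gamma_{D^*}(x,y,t) = \frac{2}{L}\sum_{k=1}^{\infty} e^{-k^{2}\pi^{2} t/L^{2}}\sin\frac{k\pi(x-a)}{L}\sin\frac{k\pi(y-a)}{L}
\]
with $a = x-\epsilon$, and applies Poisson summation (the Jacobi theta identity is presumably the ``special function identity'' the paper cites) to reach the method-of-images form, or writes the image form directly:
\[
\sqrt{4\pi t}\,\Gamma_{D^*}(x,y,t) = \sum_{n\in\mathbb{Z}} e^{-(d-2nL)^{2}/(4t)} - \sum_{n\in\mathbb{Z}} e^{-(d+2\epsilon-2nL)^{2}/(4t)},
\]
where $d = y-x$. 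The key algebraic observation is that by construction $d+2\epsilon = L$, so the arguments in the second sum are exactly $(1-2n)L$, running through every odd multiple of $L$ as $n$ varies; hence the negative sum collapses to $2\sum_{m=0}^{\infty} e^{-(2m+1)^{2} L^{2}/(4t)}$.

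Finally I would extract the main term and bound the error. The $n=0$ term of the positive sum gives the desired $e^{-d^{2}/(4t)}$, and all other positive terms are nonnegative and may be discarded. For the negative contribution, since $L = d+2\epsilon$ with $d \geq 0$ yields $L^{2} \geq d^{2} + 4\epsilon^{2}$, we have
\[
e^{-(2m+1)^{2}L^{2}/(4t)} \leq e^{-d^{2}/(4t)}\, e^{-\epsilon^{2}/t}\, e^{-m(m+1)L^{2}/t}.
\]
Summing the geometric tail in $m$ and noting that the proposition is only nontrivial when $e^{-\epsilon^{2}/t} < 1/2$ (on which range $L^{2}/t$ is bounded away from $0$, so the tail is an absolute constant) yields
\[
\Gamma_{D^*}(x,y,t) \geq \frac{C}{\sqrt{t}}\, e^{-d^{2}/(4t)}\bigl(1 - 2 e^{-\epsilon^{2}/t}\bigr),
\]
after absorbing constants into a single $C<1$.

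The main obstacle is exactly this last accounting: the image series is alternating, with infinitely many positive (even) and negative (odd) reflections, and a naive term-by-term balance is delicate. The relationship $d+2\epsilon = L$ forced by the choice of $D^*$ is what makes the odd images land precisely on the odd multiples of $L$ and lets the negative sum be controlled by a single clean geometric series; the factor $2$ in front of $e^{-\epsilon^{2}/t}$ then reflects the two nearest odd images, one reflected off each endpoint of $D^*$, which are equidistant from $[x,y]$ by symmetry.
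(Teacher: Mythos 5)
The paper itself does not prove this proposition; it cites van den Berg and describes his method as combining the eigenfunction expansion of the Dirichlet heat kernel on an interval with a special function identity, which is exactly the Poisson-summation/method-of-images route you take. So your overall strategy is the right one, and the reduction to the minimal interval $D^*=(x-\epsilon,y+\epsilon)$ by domain monotonicity, the image-sum formula, and the observation that $d+2\epsilon=L$ makes the negative images land on $(2m+1)L$ are all correct.

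The gap is in the final bookkeeping. You discard every positive image with $n\neq 0$ and then bound the whole negative sum $2\sum_{m\geq 0}e^{-(2m+1)^2L^2/(4t)}$. Doing that inevitably produces a coefficient strictly larger than $2$ in front of $e^{-\epsilon^2/t}$: the $m\geq 1$ tail adds a strictly positive amount, so what you actually prove is
$$\sqrt{4\pi t}\,\Gamma_{D^*}\;\geq\; e^{-d^2/(4t)}\bigl(1-(2+\delta)\,e^{-\epsilon^2/t}\bigr)$$
for some $\delta>0$ (a small but nonzero constant once you restrict to $e^{-\epsilon^2/t}<1/2$). On the window $e^{-\epsilon^2/t}\in\bigl(\tfrac{1}{2+\delta},\tfrac12\bigr)$ the right-hand side is $\leq 0$ while $1-2e^{-\epsilon^2/t}>0$, so no positive constant $C$ can turn this into the stated inequality; shrinking $C$ cannot change the sign of the bracket. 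You can see the failure concretely at $d=0$ with $u=e^{-\epsilon^2/t}$ just below $1/2$: after dropping positives your lower bound is $1-2u-2u^9-\cdots<0$, whereas the actual series $1-2u+2u^4-2u^9+\cdots$ is still $\geq 1-2u$ because of the retained $+2u^4$ term.

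The fix is to pair rather than discard. Listing the positive images at distances $d$ and $2kL\pm d$ ($k\geq 1$) and the negative images at $(2m+1)L$ (each with multiplicity two), the image sum rearranges as
$$\sqrt{4\pi t}\,\Gamma_{D^*}=\Bigl[e^{-d^2/4t}-2e^{-L^2/4t}\Bigr]+\sum_{k\geq1}\Bigl[e^{-(2kL-d)^2/4t}+e^{-(2kL+d)^2/4t}-2e^{-((2k+1)L)^2/4t}\Bigr].$$
Because $0\leq d<L$, each $k\geq 1$ block is nonnegative (both $2kL-d$ and $2kL+d$ are less than $(2k+1)L$), so the whole tail may be dropped, leaving exactly $\sqrt{4\pi t}\,\Gamma_{D^*}\geq e^{-d^2/4t}-2e^{-L^2/4t}$. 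Your inequality $L^2\geq d^2+4\epsilon^2$ then finishes the proof cleanly with $C=(4\pi)^{-1/2}$. In short: keep the positives $2kL\pm d$ long enough to cancel the negatives $(2k+1)L$, and only then throw things away.
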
 

 \section{Proof of the Lower Bound}\label{proof2}
 
 \begin{proof}[Proof of Theorem \ref{lbounds}]
First suppose $|x - y| < \frac{1}{8}\sqrt{t}$. We consider the ball $B = B_{\sqrt{t}}(x)$.
Let $H_{B}$ be the restriction of the operator $H$ to $B$ with Dirichlet boundary conditions;
and let $p_{B}(x,y,t)$ be the associated heat kernel. Note that 
$u(\cdot, t) = p(\cdot,y,t) - p_{B}(\cdot,y,t)$ is a weak solution to $(\partial_{t} + H)u = 0$ on $B \times (0,\infty)$,
for any $y \in B$. And because $p_{B}(\cdot,y,t)$ vanishes on $\partial B$, $u$ is nonnegative on the 
boundary, implying by the maximum principle that
\begin{equation}\label{lb1}
p(x,y,t) \geq p_{B}(x,y,t)\qquad \textrm{in } B\times B\times(0,\infty)
\end{equation}
since the choice of $y \in B$ was arbitrary.

Now we use again the hypothesis that $V \in RH_{\infty}$. With $C > 0$ the $RH_{\infty}$ constant of $V$,
we have for $M = C \av_{Z_{2\sqrt{t}}(x)}V$ that $V \leq M$ in $B$. Let $H_{D}^{M}$ be the operator $(-\Delta + M)$ 
restricted to $B$ with Dirichlet boundary conditions; so its heat kernel is just
$e^{-Mt}\Gamma_{D}$, where $\Gamma_{D}$ is the heat kernel of the Dirichlet 
Laplacian on $B$. Now for $y \in B$ set
 $$w(x,t) = p_{B}(x,y,t) - e^{-Mt}\,\Gamma_{D}(x,y,t).$$
 Then $w \equiv 0$ on $\partial B \times (0,\infty)$, and inside $B$ we have for any $t > 0$
$$
 	(\partial_{t} + H_{D}^{M})w = (\partial_{t} - \Delta + M)p_{B} 
		= (M - V)p_{B} 
		\geq 0.
 $$
 So $w$ is a supersolution of $(\partial_{t} + H_{B}^{M})$ in the cylinder 
 $Q = B \times (0,\infty)$, vanishing on the boundary, and by the maximum principle satisfies
 $$p_{B}(x,y,t) \geq e^{-Mt}\,\Gamma_{D}(x,y,t)\qquad \textrm{in } B\times B\times(0,\infty)$$
 since $y \in B$ was arbitrary. 
 
 Applying either Proposition~\ref{eigen} or Theorem~\ref{berg}
 with $\epsilon = \frac{7}{8}\sqrt{t}$, we obtain from the preceding inequality and \eqref{lb1}
 that
 \begin{equation}\label{ondiag}
 p(x,y,t) \geq \frac{c_{0}}{t^{n/2}} \exp\left\{-c_{1}t\,\av_{Z_{2\sqrt{t}}(x)}V\right\}
 \end{equation}
 where $c_{0} < 1$ is a positive constant depending only on $n$, and $c_{1}$ is just the
 $RH_{\infty}$ constant of $V$. Because $V$ is doubling, we may also increase $c_{1}$
 and replace the cube $Z_{2\sqrt{t}}(x)$ with $Z_{\sqrt{t}}(x)$. These on-diagonal bounds 
 conclude the first part of the
 proof. 
 
 Off-diagonal bounds come next. Assume $|x - y| \geq \frac{1}{8}\sqrt{t}$.
We begin by considering the line segment from $x$ to $y$ given by
$$l(s) = x + s\frac{y-x}{|y-x|}, \qquad s \in \left[0,|y-x|\right].$$
We will partition this segment by a sequence of $M+1$ points $\{x_{i}\}_{i=0}^{M}$;
the sequence is determined by the requirement that $|x_{i} - x_{i+1}| = \frac{|y-x|}{M}$,
where $M$ is the smallest integer satisfying
\begin{equation}\label{aria}
\frac{|y-x|}{M} < \frac{1}{16}\sqrt{\frac{t}{M}} \Leftrightarrow \frac{256|y-x|^2}{t} < M
\end{equation}
Now directly from Lemma~\ref{semi} we have
$$	p(x,y,t) = \int_{\mathbb{R}^{n}}
		p(x,z_{1},t/M)p(z_{1},y,(M-1)t/M)\,dz_{1} $$
and applying the semigroup property in the same way to the right-most integrand, $(M-1)$ times,
we get an iterated integral
\begin{equation*}
p(x,y,t) = \int_{\mathbb{R}^{n}}\cdots\int_{\mathbb{R}^{n}}
			p(x,z_{1},t/M)\cdots p(z_{M-1},y,t/M)\,dz_{1}\cdots dz_{M-1}
\end{equation*}
Upon restricting each $dz_{i}$ integral to $Z_{i} = Z_{\sigma\sqrt{t/M}}(x_{i})$ with $0 < \sigma < 1$ such that
$$z_{i} \in Z_{i} \textrm{ and }  z_{i+1} \in Z_{i+1} \Rightarrow |z_{i} - z_{i+1}| < \frac{1}{8}\sqrt{t/M}$$
 we then obtain
\begin{equation}\label{intlb}
p(x,y,t) \geq \int_{Z_{1}}\cdots\int_{Z_{M-1}}
			p(x,z_{1},t/M)\cdots p(z_{M-1},y,t/M)\,dz_{1}\cdots dz_{M-1} 
\end{equation}
And now our on-diagonal lower bounds apply to each term in the integrand. 

That is, we have for each $i = 0,\ldots ,M-1$ that
$$p(z_{i},z_{i+1},t/M) \geq c_{0}\biggl(\frac{M}{t}\biggr)^{n/2} 
	\exp\left\{-c_{1}\frac{t}{M}\, \av_{Z_{\sigma\sqrt{t/M}}(z_{i})} V\right\}.$$
To assimilate these into a single lower bound for $p(x,y,t)$, we use Lemma~\ref{doubling}. 
In particular we see that 
\begin{align*}
  \int_{Z_{\sigma\sqrt{t/M}}(z_{i})}V &\leq C\left(\frac{1}{2^{n}}\right)^{\epsilon}
  	\int_{Z_{2\sigma\sqrt{t/M}}(x_{i})}V \\
	& \leq C\int_{Z_{i}}V
\end{align*}
and iterating this inequality up to $M$ times (if $i = M-1$) we may even conclude
$$\int_{Z_{\sigma\sqrt{t/M}}(z_{i})}V \leq C^{M} \int_{Z_{0}}V = C^{M} \int_{Z_{\sigma\sqrt{t/M}}(x)} V.$$
So in fact we have a lower bound, uniform in $i$, of
$$p(z_{i},z_{i+1},t/M) \geq c_{0}\biggl(\frac{M}{t}\biggr)^{n/2} 
	\exp\left\{-c_{1}\frac{t}{M}\, C^{M} \av_{Z_{\sigma\sqrt{t/M}}(x)} V\right\}$$
It now just remains to apply this to each term in the integrand of \eqref{intlb}.

This yields
\begin{align*}p(x,y,t) &\geq \prod_{i=0}^{M-1}c_{0}\biggl(\frac{M}{t}\biggr)^{n/2}
	\exp\left\{-c_{1}\frac{t}{M}C^{M}\, \av_{Z_{\sigma\sqrt{t/M}}(x)} V\right\}
	\cdot\prod_{i=1}^{M-1}|Z_{\sigma\sqrt{t/M}}(x_{i})| \\
    	&\geq \frac{\sigma^{-1}}{t^{n/2}} M^{n/2}(\sigma c_{0})^{M}
	\exp\left\{-c_{1}t\bigl(C^{M}\, \av_{Z_{\sigma\sqrt{t/M}}(x)} V\bigr)\right\}.
\end{align*}
Because $c_{0} < 1$ and $\sigma < 1$, the factor $M^{n/2}(\sigma c_{0})^{M}$
gives exponential decay in $M$;  
and by \eqref{aria}, $M$ is comparable to $\frac{|x-y|^2}{t}$. Increasing constants as
necessary and using Christ's lemma to replace $M$ with $\frac{|x-y|^2}{t}$
 yields \eqref{convenient2} with $\kappa = 1/8$. 
\end{proof}

\bibliographystyle{alpha}
\bibliography{mybib8-28-15}

\end{document}